\theoremstyle{plain}
\newtheorem{theorem}{Theorem}[section]
\newtheorem{lemma}[theorem]{Lemma}
\newtheorem{proposition}[theorem]{Proposition}
\theoremstyle{definition}
\newtheorem{definition}[theorem]{Definition}
\numberwithin{equation}{section}
\theoremstyle{remark}
\newtheorem{remark}[theorem]{Remark}
\DeclareMathOperator{\Ad}{Ad}
\newcommand{\eps}{\epsilon}
\newcommand{\id}{\mathrm{id}}
\begin{document}

\title{Decomposable approximations revisited}

\thanks{N.B. was partially supported by NSF grant DMS-1201385; J.C. by
  NSF Postdoctoral Fellowship DMS-1303884; S.W. by an Alexander von
  Humboldt foundation fellowship and by the DFG (SFB 878).}

\author[N.\ Brown]{Nathanial P.\ Brown}

\address{\hskip-\parindent Nathanial P.\ Brown. Department of
  Mathematics, The Pennsylvavia State University, University Park,
  State College, PA, 16802, USA.}
\email{nbrown@math.psu.edu}

\author[J.\ Carri\'{o}n]{Jos\'{e} R. Carri\'{o}n} 
\address{\hskip-\parindent Jos\'{e} R.\ Carri\'{o}n, Department of
  Mathematics, Texas Christian University, Fort Worth, Texas 76129,
  USA.}
\email{j.carrion@tcu.edu}

\author[S.\ White]{Stuart White}
\address{\hskip-\parindent Stuart White, School of Mathematics and
  Statistics, University of Glasgow, Glasgow, G12 8QW, Scotland and
  Mathematisches Institut der WWU M\"unster, Einsteinstra\ss{}e 62,
  48149 M\"unster, Germany.}
\email{stuart.white@glasgow.ac.uk}

\begin{abstract}
  Nuclear C$^*$-algebras enjoy a number of approximation properties,
  most famously the completely positive approximation property. This
  was recently sharpened to arrange for the incoming maps to be sums
  of order-zero maps. We show that, in addition, the outgoing maps can
  be chosen to be asymptotically order-zero.  Further these maps can
  be chosen to be asymptotically multiplicative if and only if the
  $\mathrm{C}^*$-algebra and all its traces are quasidiagonal.
\end{abstract} 

\date{\today}

\maketitle

\section{Introduction}

Approximation properties are ubiquitous in operator algebras,
characterizing many key notions and providing essential tools.  In
particular, and central to this note, a foundational result of
Choi-Effros \cite{CE:AJM} and Kirchberg \cite{K:MN} describes
nuclearity of a $\mathrm{C}^*$-algebra in terms of completely positive
approximations.  Precisely, $A$ is nuclear if and only if there exist
finite dimensional algebras $(F_i)$ and completely positive
contractive (c.p.c.{}) maps
\begin{equation}\label{BCW:e1.1}
  A \stackrel{\psi_i} \longrightarrow F_i
  \stackrel{\phi_i} \longrightarrow A
\end{equation}
that approximate the identity in the point-norm topology, i.e.
\begin{equation}\label{BCW:e1.2}
  \lim_i\|\phi_i(\psi_i(x))-x\|=0,\quad x\in A.
\end{equation}
Some 30 years later, via Connes' celebrated work on injective von
Neumann algebras \cite{C:Ann}, this approximation property was shown
to imply a stronger version of itself: one can always take every
$\phi_i$ to be a convex combination of contractive order-zero maps
(\cite{HKW:Adv}).  This has proved crucial to applications to near
inclusions (for example, \cite[Theorem~2.3]{HKW:Adv}). In this note we
observe a further improvement: every $\psi_i$ can be taken to be
asymptotically order zero, meaning that if $a,b \in A$ are
self-adjoint and $ab = 0$, then
\begin{equation}\label{BCW:e1.3}
  \lim_i\|\psi_i(a) \psi_i(b)\|=0.
\end{equation}
It was known that (\ref{BCW:e1.3}) could be arranged under the
stronger hypothesis of finite nuclear dimension
\cite[Proposition~3.2]{WZ:Adv} and this proved vital to various
applications (cf.\ \cite{BW:CRMASSRC,Robert11,W:Invent,W:Invent2}).

Our proof follows the strategy in \cite{HKW:Adv} by obtaining suitable
factorizations of the canonical inclusion $A\hookrightarrow A^{**}$
with respect to the weak$^*$ topology; then adjusting these to take
values in $A$; and finally applying a Hahn-Banach argument to get
asymptotic factorizations in the point-norm topology. To do this in
general, however, we require some quasidiagonal ideas. Indeed, the
main technical hurdle is showing that if $A$ is quasidiagonal and all
traces on $A$ are quasidiagonal in the sense of \cite{B:MAMS}, then
one can take every $\psi_i$ to be asymptotically multiplicative (see
Theorem~\ref{BCW:thm:QDfactorization}), while retaining the
decomposition of $\phi_i$ as a convex combination of contractive order
zero maps.  This should be compared with Blackadar and Kirchberg's
characterization of nuclear quasidiagonal $\mathrm{C}^*$-algebras in
\cite{BK:MA} as those with approximations (\ref{BCW:e1.1}) and
(\ref{BCW:e1.2}) with $\psi_i$ asymptotically multiplicative.

Since all traces on nuclear quasidiagonal $\mathrm{C}^*$-algebras in
the UCT class are quasidiagonal \cite{Tikuisis-White-etal15}, our
result improves the Blackadar-Kirchberg characterization in this
case. Cones over nuclear $\mathrm{C}^*$-algebras are quasidiagonal
\cite{V:DMJ} and satisfy the UCT, so all their traces are
quasidiagonal (though we show how Gabe's work \cite{G:JFA} gives a
simpler proof of this fact in Proposition
\ref{BCW:prop:traces-on-cones-qd}). Thus we obtain our main theorem
for general nuclear $A$ by taking an order-zero splitting $A \to CA$,
applying the improved approximation maps on $CA$, then using the
quotient map $CA \to A$ to get back to $A$ (see the proof of Theorem
\ref{BCW:thm:order0-approximations} for details).

\section{Quasidiagonal traces} 

In this note, a \emph{trace} on a $C^*$-algebra means a tracial
state. Write $T(A)$ for the collection of all traces on $A$. Various
approximation properties for traces were studied in \cite{B:MAMS}; of
particular relevance here is the notion of quasidiagonality for
traces.

\begin{definition}
  A trace $\tau$ on a $C^*$-algebra $A$ is \emph{quasidiagonal} if
  there exist finite dimensional algebras $F_i$, tracial states
  $\tau_i$ on $F_i$ and c.p.c.\ maps $\theta_i\colon A\to F_i$ such
  that $\mathrm{tr}_{i} \circ \theta_i \to \tau$ in the weak$^*$
  topology and
  \begin{equation}
    \lim_{i} \| \theta_i(ab) - \theta_i(a) \theta_i(b) \|
    = 0
  \end{equation}
  for all $a, b \in A$.  Write $T_{\mathrm{qd}}(A)$ for the set of
  quasidiagonal traces of $A$.
\end{definition}
When $A$ is unital the maps $\theta_i$ can be taken to be unital and
completely positive (u.c.p.).  Theorem~3.1.6 of \cite{B:MAMS} lists
several other characterizations of amenable traces.

The main technical result of this note is the following. 

\begin{theorem} 
  \label{BCW:thm:QDfactorization}
  Let $A$ be a separable and nuclear $C^*$-algebra. Then $A$ is
  quasidiagonal and $T(A) = T_{\mathrm{qd}}(A)$ if and only if there
  exist a sequence of finite-dimensional $C^*$-algebras $(F_n)$ and
  c.p.c. maps
\begin{equation}
    A \xrightarrow{\psi_n} F_n\xrightarrow{\phi_n} A
\end{equation}
  such that
  \begin{enumerate}
  \item $\| (\phi_n\circ \psi_n) (a) - a \| \to 0$ for all $a\in
    A$;\label{BCW:thm:QDfactorization.Condition1}
    
  \item every $\phi_n$ is a convex combination of finitely many
    contractive order zero maps;
    and\label{BCW:thm:QDfactorization.Condition2}
    
  \item $\| \psi_n(ab) - \psi_n(a) \psi_n(b) \| \to 0$ for all $a,
    b\in A$.\label{BCW:thm:QDfactorization.Condition3}
\end{enumerate}
\end{theorem}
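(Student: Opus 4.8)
The plan is to prove both implications, handling the easier converse first and then the substantive forward direction along the lines sketched in the introduction.

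\emph{The converse.} Suppose maps $(\psi_n),(\phi_n)$ as in \eqref{BCW:thm:QDfactorization.Condition1}--\eqref{BCW:thm:QDfactorization.Condition3} are given. To see that $A$ is quasidiagonal, note that since each $\phi_n$ is contractive and $\phi_n\circ\psi_n\to\id$ pointwise, for every $a$ we have $\|a\|\le\liminf_n\|\psi_n(a)\|$, while $\|\psi_n(a)\|\le\|a\|$ as $\psi_n$ is contractive; hence $\|\psi_n(a)\|\to\|a\|$. Embedding each finite-dimensional $F_n$ faithfully into a matrix algebra, the resulting c.p.c.\ maps $A\to M_{k_n}$ are asymptotically isometric and, by \eqref{BCW:thm:QDfactorization.Condition3}, asymptotically multiplicative, which is exactly Voiculescu's characterisation of quasidiagonality for the separable algebra $A$. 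For the traces, fix $\tau\in T(A)$ and write $\phi_n=\sum_k\lambda_{n,k}\phi_{n,k}$ as a convex combination of contractive order zero maps. The structure theorem for order zero maps presents each $\phi_{n,k}$ as $\pi_{n,k}(\cdot)h_{n,k}$ with $h_{n,k}$ a positive contraction commuting with the range of the $*$-homomorphism $\pi_{n,k}$; a short trace computation then shows each $\tau\circ\phi_{n,k}$, and hence $\tau\circ\phi_n$, is a positive tracial functional on $F_n$. Conditions \eqref{BCW:thm:QDfactorization.Condition1}--\eqref{BCW:thm:QDfactorization.Condition2} force $\|\tau\circ\phi_n\|\to 1$, so the normalisations $\tau_n:=(\tau\circ\phi_n)/\|\tau\circ\phi_n\|$ are tracial states on $F_n$ with $\tau_n(\psi_n(a))=\|\tau\circ\phi_n\|^{-1}\tau(\phi_n(\psi_n(a)))\to\tau(a)$ for all $a$. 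Together with \eqref{BCW:thm:QDfactorization.Condition3} this exhibits $\tau$ as quasidiagonal, so $T(A)=T_{\mathrm{qd}}(A)$.

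\emph{The forward direction.} Assume $A$ is separable, nuclear, quasidiagonal, and $T(A)=T_{\mathrm{qd}}(A)$. Following \cite{HKW:Adv}, I would first build factorisations of the canonical inclusion $A\hookrightarrow A^{**}$ in the point-weak$^*$ topology: since $A$ is nuclear, $A^{**}$ is injective, hence hyperfinite by Connes' theorem, and one obtains normal c.p.c.\ maps $A\xrightarrow{\psi}F\xrightarrow{\phi}A^{**}$ with $\phi$ a convex combination of contractive order zero maps and $\phi\circ\psi$ close to the inclusion on a prescribed finite set in the $\sigma$-weak topology. The hypotheses are used to arrange, in addition, that $\psi$ is approximately multiplicative: fibrewise over a weak$^*$-dense set of extremal traces the assumption $T(A)=T_{\mathrm{qd}}(A)$ supplies approximately multiplicative matricial models compatible with the trace, while quasidiagonality of $A$ controls the properly infinite part. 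I would then \emph{correct the range from $A^{**}$ back to $A$}, perturbing the order zero building blocks of $\phi$ to take values in $A$; here quasidiagonality of $A$ re-enters, letting the relevant positive contractions and approximate matrix units be realised inside $A$. Finally, a convexity argument upgrades point-weak$^*$ to point-norm approximation: for a fixed tuple $(a_1,\dots,a_m)$ the admissible compositions $\phi\circ\psi$ form a convex subset of $A^{m}$ whose weak$^*$ and norm closures agree by Hahn--Banach, so convex combinations of the weakly good maps are norm-good, and a diagonal reindexing produces the sequence $(F_n,\psi_n,\phi_n)$ satisfying \eqref{BCW:thm:QDfactorization.Condition1}--\eqref{BCW:thm:QDfactorization.Condition3}.

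\emph{Main obstacle.} The crux is the first step of the forward direction, and specifically the \emph{simultaneity} of the two structural demands: producing a single finite-dimensional factorisation whose outgoing map $\phi$ remains a convex combination of contractive order zero maps \emph{while} the incoming map $\psi$ is approximately multiplicative. Each property is available in isolation---the order zero decomposition comes from the hyperfinite, non-factorial structure of $A^{**}$ as in \cite{HKW:Adv}, and approximate multiplicativity of $\psi$ from quasidiagonality---but they are naturally fed by different finite-dimensional models: the order zero pieces of $\phi$ track the central/type decomposition of $A^{**}$, whereas a homomorphic $\psi$ is built from the tracial (finite) data. Forcing the same $F$ to carry both the trace-compatible homomorphic approximation and the order zero decomposition is the heart of Theorem~\ref{BCW:thm:QDfactorization}; I expect this to require combining the quasidiagonal trace approximations with the injective-factorisation machinery inside a common tracial von Neumann algebra---for instance the direct sum of GNS representations over a weak$^*$-dense set of traces---rather than treating the two inputs separately.
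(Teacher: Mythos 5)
Your converse direction is correct and essentially the paper's argument (your explicit normalisation of $\tau\circ\phi_n$ is a detail the paper glosses over; the trace computation you sketch is \cite[Corollary~4.4]{Winter-Zacharias09b}). The forward direction, however, has a genuine gap exactly where you flag your ``main obstacle'': everything from ``The hypotheses are used to arrange\dots'' onward is a statement of intent, and what is missing is the entire technical core of the paper (Proposition~\ref{BCW:prop:weak-factorization}, proved via Lemmas~\ref{BCW:lem:properlyinfinite} and~\ref{BCW:lem:weak-factorization-finite}). Concretely, the paper splits $A^{**}$ into its properly infinite and finite summands. On the properly infinite summand $M$, the NF-characterisation \cite[Theorem~5.2.2]{BK:MA} gives u.c.p.\ maps $A\xrightarrow{\psi}F\xrightarrow{\theta}A$ with $\psi$ approximately multiplicative and $\theta\circ\psi$ norm-close to the identity, and Haagerup's device \cite[Section~2]{H:JFA} writes $\theta=v^*\iota(\cdot)v$ for an isometry $v$ and a unital embedding $\iota\colon F\to M$; approximating $v$ $\sigma$-strongly by unitaries converts $\theta$ into an honest $^*$-homomorphism $\phi$ \emph{without touching} $\psi$. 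On the finite summand one reduces to a single normal trace $\tau$, splits $N=\pi_\tau(A)''$ into type I (handled by an AF argument with conditional expectations) and type II$_1$; there the center-valued trace applied to a dense sequence of positive contractions produces a central partition $(p_i)$, quasidiagonality of the localised traces $\tau(p_i\pi_\tau(\cdot))/\tau(p_i)$ yields approximately multiplicative matrix models, these matrix algebras embed unitally into the corners $p_iNp_i$, and the resulting $^*$-homomorphism into the tracial ultrapower $N^\omega$ agrees with $\pi_\tau^\omega$ on all traces --- verified via the Fack--de la Harpe commutator theorem \cite[Theorem~3.2]{Fack-Harpe80} --- whence it is unitarily equivalent to $\pi_\tau^\omega$ by the uniqueness theorem for homomorphisms into finite von Neumann algebras together with Kirchberg's $\epsilon$-test \cite[Appendix~A]{Kirchberg06}. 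Your guess of ``a common tracial von Neumann algebra / direct sum of GNS representations over extremal traces'' does not by itself supply the decisive step, which is matching the traces of a homomorphic finite-dimensional model to $\pi_\tau$ and then upgrading trace-agreement to unitary equivalence.

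Two structural misconceptions in your sketch are worth correcting. First, the tension you describe between the order zero decomposition of $\phi$ and the multiplicativity of $\psi$ dissolves in the paper's route: at the weak$^*$ level (unital case) $\phi$ is a genuine $^*$-homomorphism on both summands; order zero enters only through compression by a central projection in the non-unital reduction, and convex combinations appear only at the final Hahn--Banach step, where approximate multiplicativity of $\psi$ survives because a direct sum of approximately multiplicative maps is approximately multiplicative. Second, pushing $\phi$ from $A^{**}$ back into $A$ does \emph{not} use quasidiagonality of $A$, contrary to your claim about realising positive contractions and matrix units inside $A$: this is \cite[Lemma~1.1]{HKW:Adv}, valid for arbitrary order zero maps out of a finite-dimensional algebra. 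Quasidiagonality of $A$ is consumed on the properly infinite summand (via the NF theorem), and quasidiagonality of traces on the finite one.
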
 

Only one implication of this theorem requires much work. Indeed, if
$A$ has approximations with properties
(\ref{BCW:thm:QDfactorization.Condition1})--(\ref{BCW:thm:QDfactorization.Condition3}),
then $A$ is quasidiagonal (this is an easy implication in
\cite[Theorem 5.2.2]{BK:MA}; the maps $\psi_i$ are approximately
multiplicative by (\ref{BCW:thm:QDfactorization.Condition3}), and
(\ref{BCW:thm:QDfactorization.Condition1}) ensures that they are
approximately isometric). It is equally routine to check that all
traces are quasidiagonal.  Indeed, since a trace composed with an
order-zero map is a trace by \cite[Corollary
4.4]{Winter-Zacharias09b}, and each $\phi_n$ is a convex combination
of order zero maps, given a trace $\tau_A\in T(A)$, it follows that
$\tau_A\circ\phi_n$ defines a trace on $F_n$. Then condition
(\ref{BCW:thm:QDfactorization.Condition1}) ensures that
$\tau_A\circ\phi_n\rightarrow \tau_A$ weak$^*$.

In order to prove the reverse implication it will suffice to prove a
$\sigma$-weak version for the canonical inclusion $\iota \colon A
\hookrightarrow A^{**}$.  Namely, we prove the following proposition
in the remainder of this section.

\begin{proposition} 
  \label{BCW:prop:weak-factorization}
  Let $A$ be a separable nuclear, and quasidiagonal $C^*$-algebra
  with $T(A) = T_{\mathrm{qd}}(A)$. Then there are nets of
  finite-dimensional $C^*$-algebras $(F_i)$ and of c.p.c. maps
  \begin{equation}
    A \xrightarrow{\psi_i} F_i \xrightarrow{\phi_i} A^{**}
  \end{equation}
  such that
  \begin{enumerate} \label{BCW:prop:weak-factorization.Condition1}
  \item $(\phi_i \circ \psi_i)(a) \to \iota(a)$ in the $\sigma$-weak
    topology for every $a\in A$;
  \item $\phi_i$ is an order zero
    map; \label{BCW:prop:weak-factorization.Condition2}
  \item $\| \psi_i(ab) - \psi_i(a) \psi_i(b) \| \to 0$ for every $a,
    b\in A$. \label{BCW:prop:weak-factorization.Condition3}
  \end{enumerate}
\end{proposition}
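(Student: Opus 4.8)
The plan is to prove Proposition~\ref{BCW:prop:weak-factorization} by reducing it to a single local approximation and then realising that approximation inside the von Neumann algebra $A^{**}$, using the hyperfiniteness coming from nuclearity together with both quasidiagonality hypotheses. Since $A$ is separable and the three conclusions are phrased through nets, it suffices to produce, for every finite $\mathcal F\subseteq A$, every $\varepsilon>0$ and every finite set $\Omega$ of normal states on $A^{**}$, a finite-dimensional $F$, a c.p.c.\ map $\psi\colon A\to F$ and an order zero c.p.c.\ map $\phi\colon F\to A^{**}$ with $|\omega(\phi\psi(a))-\omega(a)|<\varepsilon$ and $\|\psi(ab)-\psi(a)\psi(b)\|<\varepsilon$ for all $a,b\in\mathcal F$ and $\omega\in\Omega$; the required nets are then indexed by these triples. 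Recall that the normal states on $A^{**}$ are exactly the normal extensions of states on $A$, so condition (1) amounts to recovering every state of $A$ in the limit.

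First I would fix the target maps using that nuclearity of $A$ makes $A^{**}$ injective (Connes \cite{C:Ann}), hence semidiscrete, so that $\id_{A^{**}}$ admits normal c.p.\ approximations factoring through matrix algebras in the point-$\sigma$-weak topology. The refinement of \cite{HKW:Adv} lets one take the matrix-to-algebra maps to be convex combinations of contractive order zero maps, giving a first factorization in which $\phi$ is such a convex combination. The advantage of working in $A^{**}$ rather than $A$ is that such a combination can be promoted to a \emph{single} order zero map: writing each summand as $h_j\rho_j(\cdot)$ via the structure theorem \cite{Winter-Zacharias09b}, if the summands are supported under orthogonal central projections of $A^{**}$ then $x\mapsto\sum_j\rho_j(x)$ is a $*$-homomorphism and $\sum_j\lambda_j h_j$ a commuting positive contraction, so $\sum_j\lambda_j\phi_j$ is again order zero. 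I would therefore build the summands adapted to orthogonal central parts of $A^{**}$, along its central decomposition, which secures conclusion (2).

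The substantive task is to arrange $\psi$ to be asymptotically multiplicative without destroying the $\sigma$-weak convergence. Quasidiagonality of $A$ supplies asymptotically multiplicative, asymptotically isometric c.p.c.\ maps into matrix algebras, and these are what I would use to build $\psi$; the difficulty is that a generic such $\psi$ need not be compatible with any order zero $\phi$ recovering $\iota$ in the $\sigma$-weak topology. This is exactly where $T(A)=T_{\mathrm{qd}}(A)$ enters. When $\phi(x)=h\rho(x)$ is order zero, each normal trace $\bar\tau$ on the finite part of $A^{**}$ gives a positive functional $\bar\tau\circ\phi$ on $F$, represented on each block by a positive density $S_\tau$, so that $\bar\tau(\phi\psi(a))=\tr\bigl(S_\tau\,\psi(a)\bigr)$. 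Recovering $\tau$ therefore forces $\tr(S_\tau\psi(\cdot))\to\tau$, which is precisely a quasidiagonal matricial approximation of $\tau$ in the sense of \cite{B:MAMS}; and once $\psi$ is approximately multiplicative these functionals are automatically approximately tracial, consistent with the tracial limit. Since every trace of $A$ is assumed quasidiagonal, I can select the finite-dimensional approximations and the weights $h$ consistently over the whole trace simplex, matching the quasidiagonal approximations of $A$ to those of its traces.

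The main obstacle is this simultaneity: recovering \emph{all} normal functionals of $A^{**}$, not merely the traces, while keeping $\psi$ multiplicative. I would handle the finite and properly infinite parts of $A^{**}$ separately. On the finite part the maps must reproduce every trace, which is the content of $T(A)=T_{\mathrm{qd}}(A)$ and where the real work lies, whereas the properly infinite part carries enough room to absorb the remaining functionals. Organising these local data into a coherent single choice of $F$, $\psi$ and $\phi$ meeting conditions (1), (2) and (3) at once, and then assembling the adapted order zero summands into one order zero map along the central projections of $A^{**}$, is the crux; the semidiscreteness from nuclearity and the order zero technology of \cite{HKW:Adv,Winter-Zacharias09b} are the routine inputs.
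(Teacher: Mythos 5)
Your outline reproduces the paper's skeleton correctly --- reduce to a local statement indexed by finite sets, tolerances and normal functionals; split $A^{**}$ into its finite and properly infinite summands; spend $T(A)=T_{\mathrm{qd}}(A)$ on the finite part and quasidiagonality of $A$ on the infinite part --- but at both crucial points you assert exactly what has to be proved. The serious gap is on the finite part. Arranging $\tau(\phi\psi(a))\to\tau(a)$ for normal traces $\tau$, which is all your density-matrix discussion ($\mathrm{tr}(S_\tau\psi(\cdot))\to\tau$) can deliver, gives convergence only against \emph{tracial} normal functionals; a finite von Neumann algebra has an abundance of non-tracial normal states, and conclusion (1) needs convergence against all of them, equivalently (on bounded sets) $\|\phi\psi(a)-\pi_{\mathrm{fin}}(a)\|_{2,\tau}\to 0$. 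Your sentence ``I can select the finite-dimensional approximations and the weights $h$ consistently over the whole trace simplex'' is precisely the missing argument. The paper's Lemma~\ref{BCW:lem:weak-factorization-finite} bridges this gap with three ingredients absent from your sketch: (i) a reduction to a single normal trace $\tau$ and its GNS algebra $N=\pi_\tau(A)''$, using that the $\sigma$-strong$^*$ topology on bounded sets is generated by the trace $2$-norms; (ii) a blockwise construction along a partition of $Z(N)$ obtained from the center-valued trace, so that the induced map $A\to N^\omega$ agrees with $\pi_\tau^\omega$ on \emph{all} normal traces of $N^\omega$ (via the Fack--de la Harpe commutator theorem \cite{Fack-Harpe80}) --- note that quasidiagonality of the single trace $\tau$ by itself only controls $\tau$, not the other normal traces of $N$; and (iii) the decisive step converting trace agreement into $\|\cdot\|_{2,\tau}$-convergence, namely the uniqueness fact that two $^*$-homomorphisms from nuclear $A$ into a finite von Neumann algebra agreeing on normal traces are $\sigma$-strong$^*$ approximately unitarily equivalent (via hyperfiniteness of $A^{**}$), combined with Kirchberg's $\eps$-test \cite{Kirchberg06} to reindex; the resulting unitaries conjugate $\phi_k$ so that $\tilde\phi_k\psi_k(a)\to\pi_\tau(a)$ strongly. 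Without this unitary conjugation your construction proves (1) only against traces, which is strictly weaker than the statement.

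On the properly infinite part, ``carries enough room to absorb the remaining functionals'' is not an argument. The paper's Lemma~\ref{BCW:lem:properlyinfinite} runs Haagerup's mechanism \cite{H:JFA}: Blackadar--Kirchberg \cite{BK:MA} gives u.c.p.\ maps $\psi\colon A\to F$, $\theta\colon F\to A$ with $\theta\psi\approx\mathrm{id}$ in norm and $\psi$ approximately multiplicative on a finite set of unitaries; proper infiniteness yields a unital embedding $\iota\colon F\to M$ and an isometry $v$ with $\theta=v^*\iota(\cdot)v$, and since unitaries are $\sigma$-strongly dense in the isometries one replaces $v$ by a unitary $w$, making $\phi=\Ad(w^*)\circ\iota$ a $^*$-homomorphism recovering $\pi_\infty$ in $\|\cdot\|_\rho^\sharp$. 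This is where quasidiagonality of $A$ itself is consumed; your sketch never connects the asymptotically multiplicative maps to actual recovery of $\pi_\infty(a)$. Finally, your detour through the convex combinations of \cite{HKW:Adv}, promoted to a single order zero map by placing summands under orthogonal central projections, is both unsubstantiated (nothing guarantees the approximations of \cite{HKW:Adv} can be arranged centrally orthogonal while still converging to $\iota$, and certainly not compatibly with an approximately multiplicative $\psi$) and unnecessary: the paper produces genuine $^*$-homomorphisms $\phi$ on each central summand, and the direct sum of the two $^*$-homomorphisms into $M_\infty\oplus M_{\mathrm{fin}}=A^{**}$ is again a $^*$-homomorphism, hence order zero.
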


With this proposition in hand we prove
Theorem~\ref{BCW:thm:QDfactorization} by following the same steps used
to prove \cite[Theorem 1.4]{HKW:Adv} from the preparatory lemma
\cite[Lemma 1.3]{HKW:Adv}.  Indeed, using the notation of
Proposition~\ref{BCW:prop:weak-factorization}, first apply Lemma~1.1
of \cite{HKW:Adv} to see that for every $i$ there is a net of
contractive order zero maps $(\phi_{i,\lambda}\colon F_i\to
A)_\lambda$ such that $\phi_{i, \lambda}(x)$ converges $\sigma$-weakly
to $\phi_i(x)$ for every $x\in F_i$.  We may therefore assume that the
image of $\phi_i$ is contained in $A$ for every $i$.  The argument now
ends with a familiar Hahn-Banach argument, similar to the one used to
prove the completely positive approximation property of a
$C^*$-algebra from the assumption that its enveloping von Neumann
algebra is semidiscrete (see \cite[Proposition 2.3.8]{Brown-Ozawa08}).
Briefly, given a finite subset $\mathcal F$ of $A$ and $\epsilon>0$,
let $K_0 \subset \mathcal{B}(A)$ be the collection of all c.p.c maps
$\theta\colon A\to A$ which factorize as
$A\stackrel{\psi}{\rightarrow}F\stackrel{\phi}{\rightarrow}A$, where
$\psi$ is a c.p.c.\ map with $\|\psi(ab)-\psi(a)\psi(b)\|\leq
\epsilon$ for all $a,b\in \mathcal F$, and $\phi$ is a contractive
order zero map.  Since the identity map on $A$ lies in the point-weak
closure of $K_0$, it lies in the point norm closure of the convex hull
of $K_0$.  As a convex combination of maps in $K_0$ can be factorized
in the form
$A\stackrel{\psi}{\rightarrow}F\stackrel{\phi}{\rightarrow}A$, where
$\psi$ is a c.p.c.\ map with $\|\psi(ab)-\psi(a)\psi(b)\|\leq
\epsilon$ for all $a,b\in \mathcal F$ and $\phi$ a convex combination
of contractive order zero maps, we can find such $\psi$ and $\phi$
additionally satisfying $\|\phi(\psi(a))-a\|<\epsilon$ for
$a\in\mathcal F$. Theorem~\ref{BCW:thm:QDfactorization} follows by
using a countable dense subset of $A$ to produce the required sequence
of maps.

The proof of Proposition~\ref{BCW:prop:weak-factorization} requires some
lemmas and will first be carried out in the case when $A$ is
unital. We will split $A^{**}$ into two pieces, the finite and
properly infinite summands, and then handle each piece
separately.\footnote{%
  Recall that a von Neumann algebra is finite if it admits a
  separating family of tracial states, and properly infinite if it has
  no finite summand.} %
The properly infinite case is handled by a combination of Blackadar
and Kirchberg's characterization of NF-algebras in \cite{BK:MA} and
Haagerup's very short proof that semidiscreteness implies
hyperfiniteness for properly infinite von Neumann algebras
\cite[Section 2]{H:JFA}.

Recall that if $\rho$ is a normal state on a von Neumann algebra $M$,
the seminorm $\|\cdot\|_\rho^\sharp$ is given by
\begin{equation}
  \|x\|_\rho^\sharp
  = \rho\left(\frac{xx^*+x^*x}{2}\right)^{1/2},
  \quad x\in M.
\end{equation}
It is a standard fact (see e.g. \cite[III.2.2.19]{Blackadar06}) that
if $\{ \rho_i \}$ is a separating family of normal states on $M$, then
the topology generated by $\{ \|\cdot\|^\sharp_{\rho_i} \}$ agrees
with the $\sigma$-strong$^*$ topology on any bounded subset of $M$.
This will be used in both of the following lemmas.

\begin{lemma}
  \label{BCW:lem:properlyinfinite}
  Let $A$ be a unital, quasidiagonal and nuclear
  $\mathrm{C}^*$-algebra. Let $\pi_\infty:A\rightarrow M$ be the
  properly infinite summand of the universal representation of
  $A$. Then there are nets of finite-dimensional $C^*$-algebras $F_i$
  and nets of c.p.c. maps
  \begin{equation}
    A \xrightarrow{\psi_i} F_i \xrightarrow{\phi_i} M
  \end{equation}
  such that
  \begin{enumerate}
  \item $(\phi_i \circ \psi_i)(a) \to \pi_\infty(a)$ in the
    $\sigma$-strong$^*$ topology (and hence also in the $\sigma$-weak
    topology) for every $a\in
    A$; \label{BCW:lem:properlyinfinite.Condition1}

  \item $\phi_i$ is a $^*$-homomorphism for every $i$;
    and\label{BCW:lem:properlyinfinite.Condition2}

  \item $\| \psi_i(ab) - \psi_i(a) \psi_i(b) \| \to 0$ for every $a,
    b\in A$.\label{BCW:lem:properlyinfinite.Condition3}
  \end{enumerate}
\end{lemma}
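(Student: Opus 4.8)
The plan is to marry two classical inputs: the Blackadar--Kirchberg description of $A$ as an NF algebra, which supplies incoming maps that are asymptotically multiplicative, and Haagerup's treatment of properly infinite semidiscrete von Neumann algebras, which supplies outgoing $^*$-homomorphisms. First I would record the structural facts about the target. Since $A$ is nuclear, $A^{**}$ is semidiscrete, and hence so is its direct summand $M$; by hypothesis $M$ is properly infinite, and $\pi_\infty$ is the unital $^*$-homomorphism of $A$ onto this summand. Fix a separating family of normal states on $M$, so that on bounded subsets the $\sigma$-strong$^*$ topology is induced by the seminorms $\|\cdot\|_\rho^\sharp$; it is with respect to these that all estimates below are made.

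Next I would invoke \cite{BK:MA}: as $A$ is unital, nuclear and quasidiagonal it is an NF algebra, so there exist finite-dimensional $F_i$ together with u.c.p.\ maps $\psi_i\colon A\to F_i$ that are asymptotically multiplicative and c.p.c.\ maps $\eta_i\colon F_i\to A$ with $\eta_i\circ\psi_i\to\mathrm{id}_A$ in point-norm. The asymptotic multiplicativity of $\psi_i$ already delivers conclusion~(\ref{BCW:lem:properlyinfinite.Condition3}). Composing with $\pi_\infty$ produces u.c.p.\ maps $\Phi_i:=\pi_\infty\circ\eta_i\colon F_i\to M$ for which $\Phi_i\circ\psi_i=\pi_\infty\circ(\eta_i\circ\psi_i)\to\pi_\infty$ in point-norm, hence point-$\sigma$-strong$^*$. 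A short computation then shows that each $\Phi_i$ is \emph{approximately multiplicative on the image of $\psi_i$}: for $a,b$ in a fixed finite set $\mathcal F$, asymptotic multiplicativity of $\psi_i$ gives $\Phi_i(\psi_i(a)\psi_i(b))\approx\Phi_i(\psi_i(ab))=\pi_\infty\eta_i\psi_i(ab)\to\pi_\infty(a)\pi_\infty(b)$, while $\Phi_i(\psi_i(a))\,\Phi_i(\psi_i(b))\to\pi_\infty(a)\pi_\infty(b)$ as well.

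The heart of the argument, and the step I expect to be the main obstacle, is to replace each $\Phi_i$ by an honest $^*$-homomorphism without destroying the approximation of $\pi_\infty$. Here I would run Haagerup's argument from \cite[\S2]{H:JFA}: using that $M$ is properly infinite—so that $M$ contains unital copies of every finite-dimensional algebra and approximate matrix units can be corrected to exact ones—the u.c.p.\ map $\Phi_i$, which is approximately multiplicative on $\psi_i(\mathcal F)$, can be perturbed to a unital $^*$-homomorphism $\phi_i\colon F_i\to M$ with $\phi_i\approx\Phi_i$ on $\psi_i(\mathcal F)$ in the chosen seminorms. This yields conclusion~(\ref{BCW:lem:properlyinfinite.Condition2}), and since $\phi_i\circ\psi_i\approx\Phi_i\circ\psi_i\to\pi_\infty$ it also yields~(\ref{BCW:lem:properlyinfinite.Condition1}). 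The two delicate points are (i) carrying out Haagerup's correction so that it is controlled precisely on $\psi_i(\mathcal F)$ rather than on all of $F_i$—it is exactly the approximate multiplicativity of $\psi_i$ that makes this feasible—and (ii) organizing the resulting double indexing, namely the NF index $i$ together with the finite set and tolerance used in each correction, into a single net realizing all three conditions simultaneously.
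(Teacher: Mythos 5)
Your skeleton matches the paper's proof exactly: reduce to a finite set, a tolerance and (by averaging) a single normal state; get $\psi$ and $\eta$ (the paper's $\theta$) from Blackadar--Kirchberg's NF theorem \cite[Theorem 5.2.2(vi)]{BK:MA}, which handles conclusion~(3); and use proper infiniteness of $M$ \`a la Haagerup \cite[Section 2]{H:JFA} to produce the $^*$-homomorphisms. But the step you yourself flag as the heart of the argument is mis-described, and as written it would fail. ``Correcting approximate matrix units to exact ones'' requires $\Phi_i=\pi_\infty\circ\eta_i$ to be approximately multiplicative on the matrix units of $F_i$, whereas all you know is approximate multiplicativity on the handful of elements $\psi_i(a)$, $a\in\mathcal F$; you have no control whatsoever over $\Phi_i(e_{jk})$, so there are no approximate matrix units to correct. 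Moreover, your diagnosis that ``it is exactly the approximate multiplicativity of $\psi_i$ that makes this feasible'' is wrong on both counts: in Haagerup's argument (and the paper's) no multiplicativity of anything is used to build $\phi_i$ --- it enters only in conclusion~(3).

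The actual mechanism is different. By \cite[Proposition 2.1]{H:JFA}, any u.c.p.\ map from a matrix algebra $F$ into the properly infinite $M$ is \emph{exactly} of the form $v^*\iota(\cdot)v$ for a fixed unital embedding $\iota\colon F\to M$ and an isometry $v\in M$; no perturbation of $\Phi_i$ is performed at all. Instead, since isometries lie in the $\sigma$-strong closure of the unitaries, one replaces $v$ by a unitary $w$ and takes $\phi=\mathrm{Ad}(w^*)\circ\iota$, which is a genuine $^*$-homomorphism from the outset, and then one estimates $\|\phi(\psi(u))-\pi_\infty(u)\|_\rho^\sharp$. Here a second idea you omit is essential: the estimate runs through
\begin{equation*}
  \|\iota(\psi(u))w\xi - w\pi_\infty(u)\xi\|^2
  \leq 2 - 2\Re\big\langle \iota(\psi(u))w\xi,\, w\pi_\infty(u)\xi \big\rangle,
\end{equation*}
and the right-hand side is small only because $\Re\langle\cdot,\cdot\rangle$ is forced close to $\|\pi_\infty(u)\xi\|^2=1$, which uses that $u$ is a \emph{unitary}. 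This is why the paper indexes its net by finite sets of unitaries (harmless, since unitaries span $A$); for general contractions $a$ with $\|\pi_\infty(a)\xi\|<1$ the bound degenerates, so your version, tested on arbitrary finite sets, does not close. Your point (ii) about organizing the net (finite set, finite set of states, tolerance) is handled in the paper exactly as you anticipate, but without the dilation-plus-unitary-approximation mechanism and the reduction to unitaries, the central perturbation step of your proposal has a genuine gap.
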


\begin{proof}
  Fix $\epsilon>0$, a finite subset $\mathcal F$ of unitaries in $A$, and
  finitely many normal states $\rho_1,\dots,\rho_m$ on $M$.  We will
  produce a factorization
  \begin{equation}
    A \xrightarrow{\psi} F \xrightarrow{\phi} M
  \end{equation}
  where $F$ is a matrix algebra, $\phi$ is a $^*$-homomorphism and
  $\psi$ is a u.c.p.\ map, such that
  \begin{align}
    \|\phi(\psi(u))-u\|_{\rho_i}^\sharp&<2\epsilon^{\frac12}\label{BCW:e2.7}\\
    \shortintertext{and}
    \|\psi(uv)-\psi(u)\psi(v)\|&<\epsilon,
  \end{align}
  for all $u,v\in\mathcal F$ and $i=1,\dots,m$.  In this way we obtain
  the desired net indexed by finite subsets of unitaries, finite
  subsets of normal states and tolerances $\epsilon$. By working with
  $\rho=\frac{1}{m}\sum_{i=1}^m\rho_i$, and replacing $\epsilon$ by
  $\epsilon/m$, it suffices to obtain the single estimate
  \begin{equation}
    \|\phi(\psi(u))-u\|_{\rho}^\sharp<2\epsilon^{\frac12},
    \quad u\in\mathcal F,
  \end{equation}
  in place of (\ref{BCW:e2.7}).

  Since $A$ is nuclear and quasidiagonal, it is NF by \cite[Theorem
  5.2.2]{BK:MA} and so, by part (vi) of this theorem, there exists a
  matrix algebra $F$ and u.c.p.\ maps
  \begin{equation}
    A \xrightarrow{\psi} F \xrightarrow{\theta} A
  \end{equation}
  such that
  \begin{equation}
    \|(\theta \circ \psi)(u)-u\| < \epsilon \label{BCW:Infinite.1}
  \end{equation}
  and
  \begin{equation}
    \| \psi(uv) - \psi(u) \psi(v) \| < \epsilon,
  \end{equation}
  for all $u,v\in \mathcal F$.  The estimate in (\ref{BCW:Infinite.1})
  gives
  \begin{equation}\label{BCW:eq16}
    \|\pi_\infty(\theta(\psi(u))-u)\|_\rho^\sharp<\epsilon,
  \end{equation}
  for all $u\in\mathcal F$.
  
  We now follow the proof of \cite[Theorem 2.2]{H:JFA}. As $M$ is
  properly infinite, we can fix a unital embedding $\iota:F\rightarrow
  M$. Then by \cite[Proposition 2.1]{H:JFA} there exists an isometry
  $v\in M$ such that $\theta(x)=v^*\iota(x)v$ for all $x\in F$.  If
  $v$ is a unitary (which is impossible, in general), then we're done
  because $\mathrm{Ad}(v) \circ \iota$ is the desired
  $*$-homomorphism. Since the $\sigma$-strong closure of unitaries in
  any von Neumann algebra is the set of all isometries (cf.\
  \cite[Lemma XVI.1.1]{Takesaki03a}), the remainder of the proof
  (which follows the estimates on page 167 of \cite{H:JFA}) amounts to
  approximating $v$ with a suitable unitary.

  We may assume that $M$ is concretely represented on some Hilbert
  space $\mathcal H$ so that $\rho$ is a vector state, given by a unit
  vector $\xi\in\mathcal H$. Using the identity $\|x\xi\|^2 +
  \|x^*\xi\|^2 = 2(\|x\|_\rho^\sharp)^2$, which is valid for all $x\in
  M$, and equation (\ref{BCW:eq16}) we have
 \begin{align}
    \| ( v^*\iota( \psi(u) )v - \pi_\infty(u) )\xi \|
    <
    2^{\frac12}\epsilon\\
    \shortintertext{and}
    \| ( v^*\iota(\psi(u)^*)v-\pi_\infty(u^*) )\xi \|
    <
    2^{\frac12}\epsilon.
  \end{align}
 This implies 
  \begin{align}
    \Re \big\langle \iota( \psi(u) )v\xi , v\pi_\infty(u)\xi \big\rangle
    >
    1 - 2^{\frac12}\epsilon\\
    \shortintertext{and}
    \Re\big\langle \iota( \psi(u)^* )v\xi , v\pi_\infty(u^*)\xi \big\rangle
    >
    1-2^{\frac12}\epsilon.
  \end{align}
 Now choose a unitary $w\in M$ such that, for all $u\in
  \mathcal{F}$,
  \begin{align}
    \Re\big\langle \iota( \psi(u) )w\xi , w\pi_\infty(u)\xi \big\rangle
    >
    1-2\epsilon\\
    \shortintertext{and}
    \Re\big\langle \iota( \psi(u)^* )w\xi , w\pi_\infty(u^*)\xi \big\rangle
    >
    1-2\epsilon.
  \end{align}
  Then, since $\|\iota(\psi(u))w\xi\|\leq 1$ and
  $\|\iota(\psi(u^*))w\xi\|\leq 1$, we have
  \begin{align}
    \| \iota( \psi(u) )w\xi - w\pi_\infty(u)\xi \|^2
    &\leq
      2 - 2\Re\langle \iota( \psi(u) )w\xi , w\pi_\infty(u)\xi \rangle
      < 4\epsilon\\    \shortintertext{and}
    \| \iota( \psi(u^*) )w\xi - w\pi_\infty( u^* )\xi \|^2
    &\leq
      2 - 2\Re\langle \iota( \psi(u^*) )w\xi , w\pi_\infty( u^* )\xi\rangle
      < 4\epsilon,
  \end{align}
  for all $u\in\mathcal F$.  Then
  $\phi=\mathrm{Ad}(w^*)\circ\iota:F\rightarrow M$ is a
  $^*$-homomorphism with
  \begin{equation}
    \| \phi(\psi(u)) - \pi_\infty(u) \|_\rho^\sharp
    <
    (4\epsilon)^{\frac12},
    \quad u\in\mathcal F,
  \end{equation}
  as required.
\end{proof}

Next we deal with the finite part of $A^{**}$. We need the following
standard uniqueness fact.  Let $A$ be a separable nuclear
$C^*$-algebra, and $N$ a finite von Neumann algebra. Then it is well
known, though most often stated when $N$ is a factor (see
\cite{Jung07} and \cite{Atkinson15} which give converse statements),
or when $N$ has separable predual (see \cite[Theorem
5]{Ding-Hadwin05}) that two $^*$-homomorphisms
$\phi_1,\phi_2:A\rightarrow N$ are $\sigma$-strong$^*$ approximately
unitarily equivalent in that there is a net of unitaries $u_i$ such
that $u_i\phi_1(a)u_i^*\rightarrow \phi_2(a)$ in the
$\sigma$-strong$^*$ topology for all $a\in A$ if and only if
$\tau\circ\phi_1=\tau\circ\phi_2$ for all normal traces $\tau$ on
$N$. Indeed, $\phi_1$ and $\phi_2$ extend to normal representations
$\phi_1^{**},\phi_2^{**}:A^{**}\rightarrow N$ that agree on
traces. Since $A^{**}$ is injective, it is hyperfinite\footnote{%
  See \cite{Elliott78} for the extension of Connes' theorem to the
  non-separable predual case used here.}, %
so there is an increasing net of finite dimensional subalgebras
$(F_\lambda)$ that is $\sigma$-strong$^*$ dense in $A^{**}$.  For each
$\lambda$, the condition that
$\tau\circ\phi_1^{**}|_{F_\lambda}=\tau\circ\phi_2|^{**}_{F_\lambda}$
for all normal traces $\tau$ on $N$ gives a unitary $u_\lambda$ with
$\mathrm{Ad}(u_\lambda)\circ\phi_1^{**}|_{F_\lambda}=\phi_2^{**}|_{F_\lambda}$.
The net of unitaries $(u_\lambda)$ witnesses the $\sigma$-strong$^*$
approximate unitary equivalence of $\phi_1^{**}$ and $\phi_2^{**}$ and
hence also of $\phi_1$ and $\phi_2$.

\begin{lemma}
  \label{BCW:lem:weak-factorization-finite}
  Let $A$ be a separable, unital and nuclear $C^*$-algebra and assume
  $T(A) = T_{\mathrm{qd}}(A)$.  Let $\pi_{\mathrm{fin}}\colon A\to M$
  be the finite summand of the universal representation of $A$. Then
  there are nets of finite dimensional $C^*$-algebras $F_i$ and of
  c.p.c. maps
\begin{equation}
    A \xrightarrow{\psi_i} F_i \xrightarrow{\phi_i} M
\end{equation}
  such that
  \begin{enumerate}
  \item $(\phi_i \circ \psi_i)(a) \to \pi_{\mathrm{fin}}(a)$ in the
    $\sigma$-strong$^*$ topology (and therefore also in the
    $\sigma$-weak topology) for every $a\in
    A$; \label{BCW:lem:weak-factorization-finite.Condition1}
    
  \item $\phi_i$ is a $^*$-homomorphism for every $n$;
    and \label{BCW:lem:weak-factorization-finite.Condition2}
    
  \item $\| \psi_i(ab) - \psi_i(a) \psi_i(b) \| \to 0$ for every $a,
    b\in A$.  \label{BCW:lem:weak-factorization-finite.Condition3}
  \end{enumerate}
\end{lemma}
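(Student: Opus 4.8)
The plan is to mirror the properly infinite case (Lemma~\ref{BCW:lem:properlyinfinite}), but to replace Haagerup's isometry trick---which is unavailable here, since $M$ is finite and has no proper isometries---by the uniqueness statement recorded just before the lemma. As in that proof, it suffices to produce, for each finite $\mathcal F\subseteq A$, each $\epsilon>0$, and each finite family of normal states on $M$, a single factorization $A\xrightarrow{\psi}F\xrightarrow{\phi}M$ with $F$ finite dimensional, $\phi$ a $^*$-homomorphism, $\psi$ u.c.p.\ and $\epsilon$-multiplicative on $\mathcal F$, and with $\phi\circ\psi$ close to $\pi_{\mathrm{fin}}$ on $\mathcal F$ in the relevant seminorms; the net is then indexed by such triples. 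Since $M$ is finite it carries a separating family of normal traces, so by the standard fact recalled before the statement the $\sigma$-strong$^*$ topology on bounded sets is already generated by the seminorms $\|x\|^\sharp_\tau=\tau(x^*x)^{1/2}$ as $\tau$ runs over the normal traces of $M$. This lets me phrase the target estimate in terms of finitely many normal traces and, after summing and cutting down to the support projection, in terms of a single faithful normal trace $\tau$.

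Next I would feed the hypothesis $T(A)=T_{\mathrm{qd}}(A)$ into the construction of the first leg. Set $\sigma=\tau\circ\pi_{\mathrm{fin}}\in T(A)=T_{\mathrm{qd}}(A)$. Quasidiagonality of $\sigma$, together with unitality of $A$ (which makes the approximating maps u.c.p.), supplies finite dimensional $F_i$ and u.c.p.\ maps $\psi_i\colon A\to F_i$ with $\|\psi_i(ab)-\psi_i(a)\psi_i(b)\|\to0$ and $\mathrm{tr}_{F_i}\circ\psi_i\to\sigma$ weak$^*$. I would then choose trace-preserving $^*$-homomorphisms $\gamma_i\colon F_i\to M$ (so that $\tau\circ\gamma_i=\mathrm{tr}_{F_i}$), available because $M$ is hyperfinite with faithful normal trace $\tau$, and set $\Phi_i=\gamma_i\circ\psi_i$. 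These are u.c.p., asymptotically multiplicative, and satisfy $\tau\circ\Phi_i\to\sigma=\tau\circ\pi_{\mathrm{fin}}$; passing to a tracial ultrapower $M^\omega$ they assemble into a genuine $^*$-homomorphism $\Phi\colon A\to M^\omega$ agreeing with $\iota\circ\pi_{\mathrm{fin}}$ on the limit trace.

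The final, and hardest, step is alignment: I would apply the uniqueness statement to conjugate $\Phi$ back onto $\pi_{\mathrm{fin}}$, producing unitaries $u_i\in M$ with $\|u_i\Phi_i(a)u_i^*-\pi_{\mathrm{fin}}(a)\|^\sharp_\tau\to0$, whereupon $\phi_i=\mathrm{Ad}(u_i)\circ\gamma_i$ is again a $^*$-homomorphism and $(\phi_i,\psi_i)$ is the required factorization. The main obstacle is precisely the hypothesis of that uniqueness statement: $\sigma$-strong$^*$ approximate unitary equivalence of two $^*$-homomorphisms into a finite von Neumann algebra requires them to agree on \emph{all} normal traces of the target, not merely on $\tau$ (the abelian example $M=\mathbb C^2$ with $\mathrm{id}$ and the flip, which agree on the uniform trace but are not unitarily equivalent, shows that one faithful trace is not enough). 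Arranging this amounts to matching the full center-valued trace of $\pi_{\mathrm{fin}}$, and this is where $T(A)=T_{\mathrm{qd}}(A)$ must be used in its entirety rather than for a single $\sigma$: I would have to choose the blocks of $F_i$ and the embeddings $\gamma_i$ so that $\mu\circ\Phi_i\to\mu\circ\pi_{\mathrm{fin}}$ for \emph{every} normal trace $\mu$ on $M$, exploiting that each $\mu\circ\pi_{\mathrm{fin}}$ is a quasidiagonal trace on $A$ (recall that normal traces on the finite part $M$ correspond to traces on $A$). I expect this simultaneous trace-matching, and the compatible existence of the trace-preserving embeddings $\gamma_i$ against the center of $M$, to be the crux of the argument; everything else is a routine reindexing and a lifting of the aligning unitaries from $M^\omega$ to $M$.
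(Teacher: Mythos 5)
Your outline reproduces the paper's skeleton (reduction to a single normal trace, quasidiagonality to build finite-dimensional models, unital embeddings, a tracial ultrapower, the uniqueness-via-traces remark, and Kirchberg's $\epsilon$-test to lift the aligning unitaries), but two steps that you treat as available or merely ``expected'' are precisely where the work lies, and one of them fails as stated. First, the trace-preserving embeddings $\gamma_i\colon F_i\to M$ with $\tau\circ\gamma_i=\mathrm{tr}_{F_i}$ do not exist in general: $N=\pi_\tau(A)''$ may have a finite type I summand, and there is no unital (let alone trace-preserving) embedding of, say, $M_3$ into $M_2(\mathbb{C})$, while quasidiagonality of $\sigma=\tau\circ\pi_{\mathrm{fin}}$ gives you no control over the matrix sizes of the $F_i$. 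The paper avoids this by splitting $N$ into its finite type I and type II$_1$ summands and treating the type I part by a completely different (and quasidiagonality-free) argument: $N$ is then a nonseparable AF $C^*$-algebra, so one takes a finite-dimensional subalgebra $F\subset N$ almost containing $\mathcal F$, a conditional expectation $\psi\colon N\to F$, and the inclusion $\phi\colon F\hookrightarrow N$. Quasidiagonality of traces is used only on the II$_1$ summand, where unital embeddings of arbitrary finite-dimensional algebras into corners $p_iNp_i$ always exist.

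Second, the ``simultaneous trace-matching'' you correctly identify as the crux is left unexecuted, and it is not a routine refinement of your single-trace construction: matching $\tau$ alone, or even the limit trace on $M^\omega$, is insufficient for the uniqueness statement, as you note. The paper's resolution is quantitative and center-valued: on the II$_1$ summand it discretizes the center-valued trace $E$ by spectral projections $p_i\in Z(N)$ determined by the values of $E(\pi_\tau(a_j))$ for $j=1,\dots,k$, applies quasidiagonality separately to each localized trace $\frac{1}{\tau(p_i)}\tau(p_i\pi_\tau(\cdot))$ to get blocks $\psi_{k,i}\colon A\to F_{k,i}$, and embeds each $F_{k,i}$ unitally into $p_iNp_i$. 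This yields $\sup_f|\tau(f\,\phi_k(\psi_k(a_j)))-\tau(f\,\pi_\tau(a_j))|\to 0$ uniformly over densities $f\in L^1(X,\mu)_+$ with $\|f\|_1=1$, i.e. $\|E(\phi_k(\psi_k(a_j))-\pi_\tau(a_j))\|\to 0$ in norm; the Fack--de la Harpe theorem (with its explicit norm bounds on the commutants' factors) then expresses the difference in $N^\omega$ as a finite sum of commutators, so it vanishes under \emph{every} trace on $N^\omega$, which is exactly the hypothesis the uniqueness remark needs before the $\epsilon$-test can produce genuine unitaries. Without this block-wise construction and the commutator upgrade, your alignment step cannot be run, so the proposal as written stops short of a proof rather than giving an alternative one.
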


\begin{proof}
  Recall that $M$ has a separating family of normal tracial states.
  As pointed out in the remarks preceding
  Lemma~\ref{BCW:lem:properlyinfinite}, on any bounded subset of $M$
  the $\sigma$-strong$^*$ topology agrees with the topology generated
  by the family of seminorms $\{ \| \cdot \|_{2,\tau} \}$ (where
  $\tau$ runs through all normal tracial states of $M$).  As in the
  proof of Lemma~\ref{BCW:lem:properlyinfinite}, the required nets of
  finite dimensional $C^*$-algebras and c.p.c. maps will ultimately be
  indexed by finite subsets $\mathcal{F}$ of $A$, positive numbers
  $\epsilon$, and finite subsets $\{\tau_1, \dots, \tau_m\}$ of normal
  tracial states of $M$.  Moreover, the same argument found in the
  proof of Lemma~\ref{BCW:lem:properlyinfinite} shows that it suffices
  to consider a single normal trace $\tau$ (by considering $\tau =
  \frac{1}{m}\sum_{i=1}^m \tau_i$), which we fix for the remainder of
  the proof.

  Write $N$ for $\pi_\tau(A)''$.  We claim it is enough to obtain
  finite dimensional algebras $F_i$ and maps $\psi_i\colon A\to F_i$
  and $\phi_i\colon F_i\to N$ (as opposed to $\phi_i\colon F_i\to M$)
  satisfying (\ref{BCW:lem:weak-factorization-finite.Condition2}),
  (\ref{BCW:lem:weak-factorization-finite.Condition3}), and
  \begin{equation}
    \| (\phi_i \circ \psi_i)(a) - \pi_\tau(a) \|_{2, \tau} \to 0.
  \end{equation}
  For this, first note that $J = \{ x\in M : \tau(x^*x) = 0 \}$ is a
  (closed, two-sided) ideal of $M$, and therefore of the form $Mp$ for
  some central projection $p\in M$.  Using the fact that $\tau$ is a
  faithful trace on both $N$ and $M(1-p)$, we get that $N\cong M(1-p)$
  (extending the identity on $A/ J\cap A$).  Identifying $N$ with this
  direct summand, it follows that $\| \pi_\tau(a) -
  \pi_{\mathrm{fin}}(a) \|_{2, \tau} = 0$, which proves the claim.

  Being finite, $N$ is the direct sum of a (finite) type I von Neumann
  algebra and type II$_1$ von Neuman algebra.  We can therefore deal
  with each summand separately, and combine the two approximations to
  prove the lemma.  To ease the notation, we may as well assume that
  $N$ itself is type I or type II$_1$.

  First assume $N$ is finite type I, so of the form $N \cong \oplus_i
  L^{\infty}(X_i) \otimes M_{n_i}$ for some $n_i\in\mathbb N$ and
  measure spaces $X_i$. Write $\pi_\tau(a)=\oplus_i
  \pi_\tau^{(i)}(a)$. If the direct sum is infinite then, by normality
  of $\tau$, $\pi_\tau(a)$ is the limit in $\|\cdot\|_{2,\tau}$ of the
  finite sums $\oplus_{i=1}^n\pi_\tau^{(i)}(a)$, and so it suffices to
  prove the result when the sum $N \cong \oplus_i L^{\infty}(X_i)
  \otimes M_{n_i}$ is finite. In this case $N$ is a (non-separable) AF
  $\mathrm{C}^*$-algebra, so given a finite subset $\mathcal F$ of the
  unit ball of $N$ and $\epsilon>0$ there exists some finite
  dimensional $\mathrm{C}^*$-subalgebra $F\subset N$ such that for
  each $x\in \mathcal F$, there exists a contraction $y_x\in F$ with
  $\|x-y_x\|<\epsilon$.  Fix any conditional expectation $\psi:N\to F$
  (an expectation exists by Arveson's Extension Theorem) and note that
  for $x_1,x_2\in\mathcal F$
  \begin{align}
    \|\psi(x_1x_2)-\psi(x_1)\psi(x_2)\| & \leq \|x_1x_2 -
                                          y_{x_1}y_{x_2}\| +
                                          \|x_1-y_{x_1}\| + \|x_2-y_2\|
                                          \nonumber\\
                                        & \leq 4\epsilon.
  \end{align}
  Also, $\psi$ composed with the inclusion map $\phi\colon F
  \hookrightarrow N$ is the identity on $F$, so that
  $\|\phi(\psi(x))-x\|\leq 2\epsilon$ for $x\in\mathcal F$.  Thus the
  required approximations exist in the finite type I case.

  Assume now that $N$ is type II$_1$.  The center $Z(N)$ of $N$ is an
  abelian von Neumann algebra with faithful normal state $\tau$, so of
  the form $L^\infty(X,\mu)$, where $\mu$ is induced by $\tau$.  Let
  $E\colon N\rightarrow L^\infty(X,\mu)$ denote the center valued
  trace.  Let $(a_j)_{j=1}^\infty$ be a sequence of positive
  contractions in $A$ that is dense in the unit ball of $A_+$ and such
  that $\|a_j\|<1$ for all $j$.
  
  Fix $k\in\mathbb N$. Given a $k$-tuple
  $i=(i_1,\dots,i_k)\in\{1,\dots,k\}^k$, let $p_i$ be the projection
  in $L^\infty(X,\mu)$, whose characteristic function is the set
  \begin{equation}
    \{x\in X :
    \frac{i_j-1}{k}
    \leq
    E(\pi_\tau(a_j))(x)
    <
    \frac{i_j}{k},\ j=1,\dots,k\}.
  \end{equation}
  These are pairwise orthogonal and $\sum_ip_i=1_N$.  Some of the
  $p_i$ may be zero; in what follows we only work with and sum over
  those indices $i$ for which $p_i\neq 0$. Note that
  \begin{equation}
    \| E(\pi_\tau(a_j)) - \sum_{i}\frac{i_j}{k}p_i
    \|_{L^\infty(X,\mu)}
    \leq \frac{1}{k},\ j=1,\dots,k.
  \end{equation}
  Now, any normal trace on $N$ is of the form
  $\tau(f\cdot)$ for some $f\in L^1(X,\mu)_+$ with
  $\|f\|_{L^1(X,\mu)}=1$. For such an $f$,
  \begin{equation}\label{BCW:eq:4}
    \tau(f\pi_\tau(a_j))
    =
    \tau\big( fE(\pi_\tau(a_j)) \big)
    \approx_{\tfrac{1}{k}}
    \sum_{i}\tfrac{i_j}k\tau(fp_i),
    \quad j=1,\dots,k.
  \end{equation}
  Also, for each index $i$,
  \begin{equation}\label{BCW:SAW.N1}
    |\tau(p_i\pi_\tau(a_j)) - \tau(p_i)\tfrac {i_j}k|
    \leq
    \frac{1}{k}\tau(p_i),
    \quad j=1,\dots,k.
  \end{equation}

  Now, for each $i=(i_1,\dots,i_k)$, the map
  $\frac{1}{\tau(p_i)}\tau(\pi_\tau(\cdot) p_i)$ is a tracial state on
  $A$.  Because all traces on $A$ are quasidiagonal, there exist
  matrix algebras $F_{k,i}$ and u.c.p.\ maps $\psi_{k,i}\colon
  A\rightarrow F_{k,i}$ such that
  \begin{align}
    \Big|
    \mathrm{tr}_{F_{k,i}}\big( \psi_{k,i}(a_j) \big)
    -
    \frac{1}{\tau(p_i)} \tau\big( p_i\pi_\tau(a_j) \big)
    \Big|
    & < \frac{1}{k},
      \quad j=1,\dots,k \label{BCW:SAW.N2}\\
    \shortintertext{and}
    \| \psi_{k,i}(a_{j_1}a_{j_2})
    - \psi_{k,i}(a_{j_1}) \psi_{k,i}(a_{j_2}) \|
    & <\epsilon,
      \quad j_1,j_2=1,\dots,k.
  \end{align}
  Combining (\ref{BCW:SAW.N2}) and (\ref{BCW:SAW.N1}) gives
  \begin{equation}\label{BCW:SAW.N3}
    \Big| \mathrm{tr}_{F_{k,i}}\big( \psi_{k,i}(a_j) \big)-\frac{i_j}{k} \Big|
    \leq
    \frac{2}{k}.
  \end{equation}
  Define $F_k:=\bigoplus_i F_{k,i}$ and $\psi_k := \oplus \psi_{k,i}$
  so that (\ref{BCW:lem:weak-factorization-finite.Condition3}) holds.  Since each $p_iNp_i$ is type II$_1$, there
  exists a unital $^*$-homomorphism $\phi_{k,i}:F_{k,i}\rightarrow
  p_iNp_i$ (see e.g. \cite[Lemma 2.4.8]{Brown-Ozawa08}).  Define
  $\phi_k:F_k\rightarrow N$ by $\phi_k=\oplus_i\phi_{k,i}$.  This is a
  unital $^*$-homomorphism.  Further, for each $f\in L^1(X,\mu)_+$
  with $\|f\|_{L^1(X,\mu)}=1$, we have
  \begin{align}
    \tau\big( f\phi_k(\psi_k(a_j)) \big)
    & \stackrel{\hphantom{(\ref{BCW:SAW.N3})}}{_{\hphantom{\frac 2k}}=_{\hphantom{\frac2k}}} 
      \sum_i\tau(fp_i)\mathrm{tr}_{F_{k,i}}(\psi(a_j))\nonumber\\
    &\stackrel{(\ref{BCW:SAW.N3})}{_{\hphantom{\frac 2k}}\approx_{\frac 2k}}
      \sum_i\tau(fp_i)\tfrac{i_j}{k}\nonumber\\
    &\stackrel{\eqref{BCW:eq:4}}{_{\hphantom{\frac 2k}}\approx_{\frac{1}{k}}}
      \tau(f\pi_\tau(a_j)),\quad j=1,\dots,k.
  \end{align}
  
  Thus the sequence of maps $(\phi_k\circ\psi_k)$ satisfies
  \begin{equation}
    \lim_{k\to \infty} \
    \sup_{\stackrel{f\in L^1(X,\mu)_+}{\|f\|_{L^1(X,\mu)} = 1}}
    \big|\tau\big( f\phi_k(\psi_k(a_j)) \big)
    - \tau\big( f\pi_\tau(a_j) \big) \big| = 0,\quad j\in\mathbb N.
  \end{equation}
  
  Write $N^\omega$ for the ultraproduct of $N$ with respect to some
  fixed free ultrafilter $\omega\in\beta\mathbb N\setminus\mathbb N$
  (defined with respect to $\tau$). We claim that the sequence
  $(\phi_k\circ\psi_k)$ induces a $^*$-homomorphism, call it
  $\theta\colon A\to N^\omega$, that agrees on traces with
  $\pi_\tau^\omega$ (the composition of $\pi_\tau$ with the canonical
  embedding of $N$ into $N^\omega$).  Indeed, this follows as in the
  proof of Lemma 3.21 of \cite{BBSTWW:MAMS}: fix $j$ and write
  $x_{j,k}=\phi_k(\psi_k(a_j))-\pi_\tau(a_j)$. As
  $E(x_{j,k}-E(x_{j,k}))=0$, \cite[Theorem 3.2]{Fack-Harpe80} gives
  $y_{j,k,l}$ and $z_{j,k,l}$ in $N$ for $l=1,\dots,10$ such that
  \begin{equation}
    x_{j,k} - E(x_{j,k})
    =
    \sum_{l=1}^{10}[y_{j,k,l},z_{j,k,l}]
 \end{equation}
  with $\|y_{j,k,l}\|\leq 12 \|x_{j,k}-E(x_{j,k})\|$ and
  $\|z_{j,k,l}\|\leq 12$.  These estimates ensure that $(y_{j,k,l})_k$
  and $(z_{j,k,l})_k$ represent elements $y_{j,l}$ and $z_{j,l}$ in
  $N^\omega$.  Since
  \begin{equation}
    \|E(x_{j,k})\|
    =
    \sup_{\stackrel{f\in L^1(X,\mu)_+}{\|f\|_{L^1(X,\mu)} = 1}}
    \big|\tau\big( f\phi_k(\psi_k(a_j)) \big)
    - \tau\big( f\pi_\tau(a_j) \big)\big|,
  \end{equation}
  it follows that $(E(x_{j,k}))_k$ represents $0\in N^\omega$ and so
  $(x_{j,k})_k$ represents the finite sum of commutators
  $\sum_{l=1}^{10}[y_{j,l},z_{j,l}]$ in $N^\omega$ and hence is zero
  in all traces on $N^\omega$.

  By the remark preceding the lemma, $\theta$ and $\pi_\tau^\omega$
  are $\sigma$-strong$^*$ approximately unitarily equivalent.  Because
  $A$ is separable and we work in an ultrapower, a standard reindexing
  argument (using Kirchberg's $\epsilon$-test from \cite[Appendix
  A]{Kirchberg06}) shows that $\theta$ and $\pi_\tau^\omega$ are
  actually unitarily equivalent.  That is, there exists a sequence
  $(u_k)$ of unitaries in $N$ such that
  \begin{equation}
    \lim_{k\rightarrow\omega}
    \| u_k (\phi_k\circ\psi_k)(a) u_k^* -  \pi_\tau(a) \|_{2, \tau}
    = 0,\quad a\in A.
  \end{equation}

  Let $\tilde{\phi}_k = \Ad u_k \circ \phi_k$.  Passing to a
  subsequence, if necessary, we obtain
  \begin{equation}
    \lim_{k\rightarrow\infty}
    \| (\tilde{\phi}_k\circ\psi_k)(a) -  \pi_\tau(a) \|_{2, \tau}
    = 0,\quad a\in A,
  \end{equation}
  as was to be proved.
\end{proof}

\begin{proof}[{Proof of Proposition \ref{BCW:prop:weak-factorization}.}]
  For unital $C^*$-algebras, one just takes direct sums of the maps
  provided by Lemmas \ref{BCW:lem:properlyinfinite} and
  \ref{BCW:lem:weak-factorization-finite}.  The non-unital case
  follows from the unital case as follows.

  Assume $A$ is non-unital and $T(A) = T_{\mathrm{qd}}(A)$. Then by
  \cite[Proposition 3.5.10]{B:MAMS} we have $T(\tilde{A}) =
  T_{\mathrm{qd}}(\tilde{A})$, too, where $\tilde{A}$ is the
  unitization of $A$. Hence we can find nets of finite-dimensional
  $C^*$-algebras $(F_i)$ and c.p.c. maps
  \begin{equation}
    \tilde{A} \xrightarrow{\psi_i} F_i \xrightarrow{\phi_i} (\tilde{A})^{**}
  \end{equation}
  such that
  \begin{enumerate}
  \item $(\phi_i \circ \psi_i) (a) \to \iota_{\tilde{A}}(a)$, in the
    $\sigma$-weak topology for all $a\in \tilde{A}$
  \item every $\phi_i$ is a convex combination of finitely many
    contractive order zero maps; and
  \item $\| \psi_i(ab) - \psi_i(a) \psi_i(b) \| \to 0$ for all $a,
    b\in \tilde{A}$.
  \end{enumerate}
  The short exact sequence $0 \to A \to \tilde{A} \to \mathbb{C} \to
  0$ induces a canonical isomorphism $(\tilde{A})^{**} \cong A^{**}
  \oplus \mathbb{C}$. The desired maps are now gotten by restricting
  each $\psi_i$ to $A$ and using the $\sigma$-weakly continuous
  projection $(\tilde{A})^{**} \to A^{**}$ to push the $\phi_i$'s back
  into $A^{**}$.
\end{proof}

\section{The main theorem} 

\begin{theorem} 
\label{BCW:thm:order0-approximations}
Let $A$ be a nuclear $C^*$-algebra. Then there exist nets of
finite-dimensional $C^*$-algebras $(F_i)$ and c.p.c. maps
\begin{equation}
  A \xrightarrow{\psi_i} F_i \xrightarrow{\phi_i} A
\end{equation}
such that
\begin{enumerate}
\item $\| (\phi_i \circ \psi_i) (a) - a \| \to 0$ for all $a\in A$;
\item every $\phi_i$ is a convex combination of finitely many
  contractive order zero maps;
\item $\| \psi_i(a) \psi_i(b) \| \to 0$ for all $a, b\in A_+$ that
  satisfy $ab = 0$.
\end{enumerate}
\end{theorem}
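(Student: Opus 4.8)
The plan is to obtain this as a consequence of Theorem~\ref{BCW:thm:QDfactorization}, applied not to $A$ but to the cone $CA=C_0((0,1])\otimes A$. The point is that cones over nuclear $C^*$-algebras are quasidiagonal with all traces quasidiagonal, whatever the behaviour of $A$ itself. Write $\iota_0\in C_0((0,1])$ for the function $\iota_0(t)=t$, let $\sigma\colon A\to CA$ be the order zero splitting $\sigma(a)=\iota_0\otimes a$, and let $\pi\colon CA\to A$ be evaluation at $t=1$. Then $\sigma$ is a c.p.c.\ order zero map, $\pi$ is a surjective $^*$-homomorphism, and $\pi\circ\sigma=\id_A$. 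I would first treat the separable case. When $A$ is separable, $CA$ is separable and nuclear, and it is quasidiagonal with $T(CA)=T_{\mathrm{qd}}(CA)$ by \cite{V:DMJ} and Proposition~\ref{BCW:prop:traces-on-cones-qd}. Thus Theorem~\ref{BCW:thm:QDfactorization} provides finite dimensional $C^*$-algebras $(F_n)$ and c.p.c.\ maps $CA\xrightarrow{\tilde\psi_n}F_n\xrightarrow{\tilde\phi_n}CA$ satisfying its three conclusions. I then define $\psi_n=\tilde\psi_n\circ\sigma\colon A\to F_n$ and $\phi_n=\pi\circ\tilde\phi_n\colon F_n\to A$.

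It remains to check the three conditions. For the approximation property, $\pi$ is a contractive $^*$-homomorphism with $\pi\circ\sigma=\id_A$, so for $a\in A$,
\[
  \|(\phi_n\circ\psi_n)(a)-a\|
  =\big\|\pi\big((\tilde\phi_n\circ\tilde\psi_n)(\sigma(a))-\sigma(a)\big)\big\|
  \le\big\|(\tilde\phi_n\circ\tilde\psi_n)(\sigma(a))-\sigma(a)\big\|\to 0
\]
by condition~(\ref{BCW:thm:QDfactorization.Condition1}) for $CA$. For the order zero decomposition, writing $\tilde\phi_n$ as a convex combination of contractive order zero maps $F_n\to CA$ and composing with $\pi$, each summand stays a contractive order zero map (composing an order zero map with a $^*$-homomorphism preserves the order zero property), so $\phi_n$ is again such a convex combination. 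The decisive point is the asymptotic order zero condition: if $a,b\in A_+$ satisfy $ab=0$, then, $\sigma$ being order zero, $\sigma(a)\sigma(b)=\iota_0^2\otimes ab=0$, whence
\[
  \|\psi_n(a)\psi_n(b)\|
  =\big\|\tilde\psi_n(\sigma(a))\tilde\psi_n(\sigma(b))-\tilde\psi_n(\sigma(a)\sigma(b))\big\|\to 0
\]
by condition~(\ref{BCW:thm:QDfactorization.Condition3}) for $CA$. In this way the asymptotic multiplicativity of the incoming maps on $CA$ is converted, through $\sigma$, into asymptotic orthogonality on $A$.

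For a general nuclear $A$ I would reduce to the separable case. Given a finite set $\mathcal F\subset A$ and $\epsilon>0$, choose a separable nuclear $C^*$-subalgebra $B$ with $\mathcal F\subseteq B\subseteq A$ (every separable subalgebra of a nuclear $C^*$-algebra lies in a separable nuclear subalgebra, by a standard exhaustion argument using the completely positive approximation property). Applying the separable case to $B$ and fixing a sufficiently large index, I obtain a single factorization $B\xrightarrow{\psi'}F\xrightarrow{\phi'}B$ whose three conditions hold to within $\epsilon$ on $\mathcal F$. Extending $\psi'$ to a c.p.c.\ map $\psi\colon A\to F$ by Arveson's extension theorem and composing $\phi'$ with the inclusion $B\hookrightarrow A$ to form $\phi\colon F\to A$, all three conditions survive on $\mathcal F$ (the inclusion is a $^*$-homomorphism, so the order zero decomposition is preserved, and $\psi$ agrees with $\psi'$ on $B\supseteq\mathcal F$). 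Indexing by the directed set of pairs $(\mathcal F,\epsilon)$ then produces the required net.

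Since the substantive work is carried out in Theorem~\ref{BCW:thm:QDfactorization}, I expect no serious obstacle here: the one genuinely new idea is the choice of the cone as the auxiliary algebra, and the only computation needing care is the transfer in the asymptotic order zero condition, where orthogonality $ab=0$ in $A$ is promoted to \emph{exact} orthogonality $\sigma(a)\sigma(b)=0$ in $CA$ before asymptotic multiplicativity is applied. The separable reduction is routine but should not be omitted if the statement is to cover non-separable $A$.
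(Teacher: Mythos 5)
Your proof is correct and is essentially the paper's own argument: both apply Theorem~\ref{BCW:thm:QDfactorization} to the cone over a separable nuclear subalgebra containing the given finite set (quasidiagonal by Voiculescu, all traces quasidiagonal by Proposition~\ref{BCW:prop:traces-on-cones-qd}), convert asymptotic multiplicativity into asymptotic orthogonality via the exact orthogonality $\sigma(a)\sigma(b)=0$ of the order zero splitting $a\mapsto \mathrm{id}_{(0,1]}\otimes a$, return to the algebra by evaluation at $1$ (which preserves the convex-combination-of-order-zero decomposition), and use Arveson's extension theorem for the incoming maps. The only difference is organizational: the paper folds the separable reduction into a single finite-set-and-tolerance construction on $CB$, whereas you prove the separable case on $CA$ first and then reduce, which changes nothing of substance.
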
 

To prove Theorem~\ref{BCW:thm:order0-approximations} we will apply
Theorem~\ref{BCW:thm:QDfactorization} to the cone $CA = C_0(0,1]
\otimes A$ of $A$.  We will need to know that all traces on $CA$ are
quasidiagonal for nuclear $A$.  While this follows from
\cite[Corollary 6.1]{Tikuisis-White-etal15},\footnote{The cone $CA$ is
  quasidiagonal by \cite{V:DMJ} and satisfies the UCT, since it is
  contractible.} it is really the case that the required statement is
a recasting of the ``order zero quasidiagonality result'' of
\cite[Proposition 3.2]{Sato-White-etal14} used as the starting point
in \cite{Tikuisis-White-etal15}. More generally, Gabe's ``order zero
quasidiagonality'' of amenable traces (\cite[Proposition 3.6]{G:JFA})
can also be expressed in this language, as set out below.

\begin{proposition}[{Gabe, c.f. \cite[Proposition 3.6]{G:JFA}}]
  \label{BCW:prop:traces-on-cones-qd}
  Let $A$ be a $C^*$-algebra.  Then every amenable trace on $CA$ is
  quasidiagonal. In particular if $A$ is nuclear, then all traces on
  $CA$ are quasidiagonal.
\end{proposition}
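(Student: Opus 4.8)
The plan is to exploit the correspondence of Winter and Zacharias \cite{Winter-Zacharias09b} between c.p.c.\ order zero maps out of $A$ and $*$-homomorphisms out of the cone $CA$. Writing $t\in C_0(0,1]$ for the canonical positive generator $t(s)=s$, every $*$-homomorphism $\rho\colon CA\to F$ restricts to a c.p.c.\ order zero map $\phi\colon A\to F$ via $\phi(a)=\rho(t\otimes a)$, and every c.p.c.\ order zero map $A\to F$ arises in this way. The crucial point is that a $*$-homomorphism has vanishing multiplicative defect, so any net of finite-dimensional $*$-homomorphisms $\rho_i\colon CA\to F_i$ with $\tr_{F_i}\circ\rho_i\to\tau$ in the weak$^*$ topology automatically witnesses quasidiagonality of $\tau$. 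Thus it suffices to produce such finite-dimensional representations of $CA$ recovering $\tau$; equivalently, writing $\phi_i=h_i\pi_i$ for the Winter--Zacharias factorisation (with $h_i$ a positive contraction commuting with the $*$-homomorphism $\pi_i$), it suffices to produce c.p.c.\ order zero maps $\phi_i\colon A\to F_i$ and tracial states $\tr_{F_i}$ so that $\tr_{F_i}\big(g(h_i)\pi_i(a)\big)\to\tau(g\otimes a)$ for all $g\in C_0(0,1]$ and $a\in A$.

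The existence of these order zero maps for an amenable trace is exactly the content of Gabe's \emph{order zero quasidiagonality} \cite[Proposition 3.6]{G:JFA} (and, in the case feeding into our main theorem, of \cite[Proposition 3.2]{Sato-White-etal14}); the present proposition is a recasting of that result in the language of quasidiagonal traces. I would therefore invoke Gabe's theorem to obtain order zero data recovering $\tau$ on the cone, stated in his formulation as a c.p.c.\ order zero map into a norm ultraproduct of matrix algebras, and then pass to a net (or sequence) of order zero maps into individual matrix algebras by a reindexing argument of the kind already used in Lemma~\ref{BCW:lem:weak-factorization-finite} (Kirchberg's $\epsilon$-test, \cite[Appendix A]{Kirchberg06}). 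Translating through the correspondence of the previous paragraph converts these into finite-dimensional $*$-homomorphisms $\rho_i\colon CA\to F_i$, which have zero multiplicative defect and hence satisfy all of the quasidiagonality requirements except for the trace approximation.

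Verifying the weak$^*$ trace convergence is where I expect the bookkeeping to demand the most care: one must check not merely that $\tr_{F_i}\circ\phi_i$ reproduces the restriction of $\tau$ to $A$, but that the full family $\tr_{F_i}\circ\rho_i$ converges to $\tau$ on all of $CA$, i.e.\ that the spectral distribution of $h_i$ matches the behaviour of $\tau$ in the cone direction. I would handle this by decomposing traces on $CA$ along the commuting positive contraction $t\otimes 1$ and using the order zero functional calculus $g\mapsto g(h_i)\pi_i(\cdot)$, reducing convergence of $\tr_{F_i}\big(\rho_i(g\otimes a)\big)$ to the trace estimates produced by Gabe's construction on a dense set of $g$ and $a$; a routine density and linearity argument then yields weak$^*$ convergence on all of $CA$.

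The ``in particular'' clause follows immediately: $C_0(0,1]$ is nuclear, so $CA=C_0(0,1]\otimes A$ is nuclear whenever $A$ is, and every trace on a nuclear $C^*$-algebra is amenable (see \cite{B:MAMS}); hence every trace on $CA$ is amenable and the first assertion applies. The genuinely hard part of the argument is not the translation but the input theorem itself: passing from amenability---approximate multiplicativity measured only in the $2$-norm $\|\cdot\|_{2,\tr}$---to honest order zero maps, that is, to approximate multiplicativity in operator norm on the cone. This upgrade succeeds precisely because $CA$ is a cone: the flexibility of the generator $t$ lets the multiplicative defects, which are small only on average, be absorbed by a spectral spreading argument, exactly as in the proofs of Gabe and of Sato--White et al.
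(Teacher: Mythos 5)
Your skeleton (the Winter--Zacharias correspondence between order zero maps on $A$ and $^*$-homomorphisms on $CA$, fed by Gabe's order zero quasidiagonality) is the same as the paper's, but your proposal has a genuine gap in how it treats a general amenable trace $\tau$ on $CA$. Gabe's \cite[Proposition 3.6]{G:JFA} takes as input an amenable trace $\tau_A$ \emph{on $A$} and produces a c.p.c.\ order zero map $\Psi\colon A\to\mathcal Q_\omega$ whose moments satisfy $\tau_{\mathcal Q_\omega}(\Psi(a)\Psi(1_A)^{n-1})=\tau_A(a)$; through the cone correspondence this recovers exactly the product trace $\delta_1\otimes\tau_A$ and nothing else. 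It cannot be invoked ``to obtain order zero data recovering $\tau$ on the cone'' for an arbitrary $\tau\in T(CA)$: applying it with $CA$ in place of $A$ (after unitizing) merely returns an order zero --- not asymptotically multiplicative --- map recovering $\tau$, which is essentially the hypothesis rather than the conclusion. Your third paragraph is where this gap lives: the construction forces $\tr_{F_i}(g(h_i)\pi_i(a))\to g(1)\,\tau_A(a)$, so there is no freedom to match an arbitrary spectral distribution in the cone direction, and no density-and-linearity argument can repair this. The paper's proof spends its first half precisely on the missing reduction: traces of the form $\delta_t\otimes\tau_A$ generate the Choquet simplex of $T(CA)$, the amenable traces form a face (\cite[Lemma 3.4]{Kirchberg94a}), and the quasidiagonal traces are weak$^*$-closed and convex (\cite[Proposition 3.5.1]{B:MAMS}), so it suffices to treat amenable traces of the form $\delta_t\otimes\tau_A$; one must further check that amenability of $\delta_t\otimes\tau_A$ forces amenability of $\tau_A$ (via factoring $\mu_{\tau_A}$ through $CA\odot(CA)^{\mathrm{op}}$), rescale to $t=1$, and unitize. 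None of this appears in your proposal.

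The second gap is the step you dismiss as a reindexing argument. Kirchberg's $\epsilon$-test manufactures exact solutions in an ultraproduct from componentwise approximate ones; to run it as you suggest you would need, for each $n$, either a c.p.c.\ map $CA\to\mathcal Q$ with small multiplicative defect or a genuinely order zero map $A\to F_i$ with the right trace data. The components of a c.p.c.\ lift of $\Psi$ are only \emph{approximately} order zero, the order zero relation is not stable for general $A$, and the Winter--Zacharias correspondence applies only to exactly order zero maps, so converting the approximate data into liftable $^*$-homomorphisms on the cone is exactly the nontrivial content of the paper's Lemma~\ref{BCW:Fix}, proved by the $g_\delta$ and saw-tooth functional calculus argument of Section 4 (this was the gap in the paper's own first version, repaired in revision). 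For nuclear $A$ one can bypass all of this with the Choi--Effros lifting theorem applied to $\psi\colon CA\to\mathcal Q_\omega$, which would rescue the ``in particular'' clause of your argument (your nuclearity and amenability observations there are correct), but as written your proof of the general statement is incomplete at both of these points.
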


\begin{proof}
  It is well known that traces of the form $\delta_t\otimes \tau_A$,
  where $\delta_t$ is evaluation at some $t\in(0,1]$ and $\tau_A$ is a
  trace on $A$, generate the Choquet simplex of traces on the cone
  $CA$.\footnote{That is, any trace on $CA$ lies in the
    weak$^*$-closed convex hull of the specified traces.} Since the
  amenable traces on $CA$ form a face (\cite[Lemma 3.4]{Kirchberg94a},
  see also \cite[Proposition 6.3.7]{Brown-Ozawa08}) and the set of
  quasidiagonal traces is a weak$^*$-closed, convex subset of $T(A)$
  (\cite[Proposition 3.5.1]{B:MAMS}), it suffices to show that any
  amenable trace on $CA$ of the form $\delta_t\otimes \tau_A$ for some
  $t\in (0,1]$ and some trace $\tau_A$ on $A$ is quasidiagonal.

  Note too that if $\delta_t\otimes \tau_A$ is an amenable trace on
  $CA$, then $\tau_A$ is amenable on $A$.  This follows from
  \cite[Theorem 3.1.6]{B:MAMS} by checking that the tensor product
  functional $\mu_{\tau_A}$ on the algebraic tensor product $A\odot
  A^{\mathrm{op}}$ given by $\mu_{\tau_A}(a\otimes
  b^{\mathrm{op}})=\tau_A(ab)$ is continuous with respect to the
  minimal tensor product.  Let $g\in C_0(0,1]$ be a positive
  contraction with $g(t)=1$.  Then $\mu_{\tau_A}$ factorizes as
  \begin{equation}
      A\odot A^{\mathrm{op}}
      \xrightarrow{\quad
        a\otimes
        b^{\mathrm{op}}\mapsto(g\otimes a)\otimes (g\otimes
        b)^{\mathrm{op}}\quad}
      CA\odot (CA)^{\mathrm{op}}
      \xrightarrow{\quad\mu_{\delta_t\otimes\tau_A}\quad}
      \mathbb C;
  \end{equation}
  the first of these maps is the tensor product of two c.p.c.\ maps,
  so contractive with respect to the minimal tensor product, while
  contractivity of $\mu_{\delta_t\otimes\tau_A}$ follows from the
  assumption that $\delta_t\otimes\tau_A$ is amenable.

  At this point, if $A$ is not unital, then we can unitize $A$, and
  $\tau_A$ (since the unitization of an amenable trace remains
  amenable).  As a final reduction, by considering the map $CA\to
  C_0((0,t],A)$ given by restriction, and then identifying
  $C_0((0,t],A)$ with $CA$ (by rescaling), we may as well assume that
  $t=1$.  Then \cite[Proposition 3.6]{G:JFA} gives a c.p.c.\ order
  zero map $\Psi:A\rightarrow \mathcal Q_\omega$ (where $\mathcal Q$
  denotes the universal UHF algebra and $\mathcal{Q}_\omega$ its
  ultrapower) such that
  \begin{equation}
    \label{BCW:eq:gabe-cpc}
    \tau_{\mathcal Q_\omega}\big(
    \Psi(a)\Psi(1_A)^{n-1} \big)
    =\tau_A(a), \quad
    a\in A,\ n\in\mathbb N.
  \end{equation}
  By the correspondence between order zero maps from $A$ and
  $^*$-homomorphisms from $CA$ (see \cite[Corollary
  4.1]{Winter-Zacharias09b}) we obtain a $^*$-homomorphism $\psi\colon
  CA\to \mathcal{Q}_\omega$ such that $\psi(\mathrm{id}_{(0,1]}\otimes
  a) = \Psi(a)$ for every $a\in A$.  Then for every $a\in A$ and
  $n\in\mathbb N$,
  \begin{align}\nonumber
      \tau_{\mathcal Q_\omega}\big( \psi(\mathrm{id}_{(0,1]}^n\otimes a) \big)
      &=
        \tau_{\mathcal Q_\omega}\big( \Psi(a)\Psi(1_A)^{n-1} \big)\\
      &=      \tau_A(a)
        =
        ( \delta_1\otimes\tau_A )( \mathrm{id}_{(0,1]}^n\otimes a ).
        \label{BCW:eq:psi-witness}
  \end{align}
 Thus $\psi$
  witnesses the quasidiagonality of the trace $\delta_1\otimes
  \tau_A$.\footnote{\label{FootRef}In the case of non-nuclear $A$, this uses  Lemma~\ref{BCW:Fix} in the next section to show that $\psi$ lifts to a
  c.p.c. map from $CA$ to $\ell^\infty(\mathcal{Q})$. }
\end{proof}

\begin{proof}[{Proof of Theorem \ref{BCW:thm:order0-approximations}}]
  Let $\mathcal{F}\subset A$ be finite and $\epsilon > 0$.  Then there
  is a separable nuclear subalgebra $B$ of $A$ containing
  $\mathcal{F}$. Write $\iota:B\rightarrow A$ for the canonical
  inclusion map.

  Let $\theta\colon B\to CB$ be the c.p.c. order zero map $b\mapsto
  \mathrm{id}_{(0,1]}\otimes b$.  Notice that $CB$ satisfies the
  hypotheses of Theorem~\ref{BCW:thm:QDfactorization}: it is certainly
  separable and nuclear, it is quasidiagonal by a theorem of
  Voiculescu \cite{V:DMJ}, and all of its traces are quasidiagonal by
  Proposition~\ref{BCW:prop:traces-on-cones-qd} (as $CB$ is nuclear,
  all traces are amenable).  Then there are a finite dimensional
  algebra $F$ and c.p.c maps $\psi\colon CB\to F$ and $\phi\colon F\to
  CB$ such that
  \begin{enumerate}
  \item $\| (\phi \circ \psi) (\theta(x)) - \theta(x) \| < \epsilon$;
    
  \item $\phi$ is a convex combination of finitely many contractive
    order zero maps; and
    
  \item $\| \psi\big( \theta(x)\theta(y) \big) - \psi(\theta(x))
    \psi(\theta(y)) \| < \epsilon$;
  \end{enumerate}
  for all $x,y\in \mathcal{F}$.  Let $\eta\colon CB\to B$ be given by
  the point evaluation at $1$ so that $\eta\circ\theta=\mathrm{id}_B$.

  Define a c.p.c. map $\overline{\psi}\colon A\to F$ by extending
  $\psi\circ \theta$ to $A$ (using Arveson's extension theorem) and
  set $\overline{\phi} =\iota\circ \eta \circ \phi\colon F\to A$.
  Then $\overline\phi$ is a convex combination of contractive order
  zero maps (because $\iota\circ\eta$ is a $^*$-homomorphism), $\|
  (\overline\phi \circ \overline\psi) (x) - x \| < \epsilon$ for every
  $x\in \mathcal{F}$, and $\| \overline{\psi}(x) \overline{\psi}(y) \|
  < \epsilon$ if $x,y\in \mathcal{F}$ are orthogonal positive
  elements.
\end{proof}

\begin{remark}
  As with the approximations in \cite{HKW:Adv}, attempting to merge
  the approximations of Theorem \ref{BCW:thm:order0-approximations}
  with the nuclear dimension by additionally asking for a uniform
  bound on the number of summands in the decompositions of $\Phi_i$ as
  a convex combination of order zero maps is very restrictive.  By the
  main result of \cite{C14}, such approximations only exist for $AF$
  $C^*$-algebras.
\end{remark}

\subsection*{Acknowledgements}
S.W. would like to thank Ilan Hirshberg for many helpful conversations
regarding approximations of nuclear $\mathrm{C}^*$-algebras and the
organisers of the Abel symposium for a fantastic conference.  The
authors would also like to thank the referee for a number of helpful
suggestions and comments.

\section{The proof of Proposition~\ref{BCW:prop:traces-on-cones-qd},
  revisited}
  
This section was added in the revision submitted to the editors on 24 August, 2017.  The only changes made to the earlier sections are the addition of footnote \ref{FootRef}, and the updating of references to precise locations in \cite{G:JFA} to reflect the final published version. We have also updated the publication information in the bibliography for \cite{Atkinson15,BBSTWW:MAMS,C14,G:JFA,Tikuisis-White-etal15}.

In Proposition~\ref{BCW:prop:traces-on-cones-qd} we claimed that any
amenable trace $\tau$ on a cone $CA$ over a $\mathrm{C}^*$-algebra is
quasidiagonal. In our proof we produced a $^*$-homomorphism $\psi$
from $CA$ into the ultraproduct $\mathcal Q_\omega$ of the universal
UHF-algebra so that $\tau=\tau_{\mathcal Q_\omega}\circ\psi$. When $A$
is nuclear, the Choi-Effros lifting theorem immediately gives a
c.p.c.\ lift of $\psi$ to $\ell^\infty(\mathcal Q)$.  In the general
case, such a c.p.c.\ lift must be produced in order to obtain
quasidiagonality of $\tau$, and we are sorry that we neglected to do
this in the first version of this article.  Thus for the purposes of all the other
statements in sections 1-3, which refer only
to nuclear $\mathrm{C}^*$-algebras, no additional ingredients are
needed. We complete the proof of
Proposition~\ref{BCW:prop:traces-on-cones-qd} in this section, using
Lemma \ref{BCW:Fix} to give the required lift in the case of general
$A$. 

We're very grateful to Jamie Gabe for pointing out this omission and
to Wilhelm Winter --- the wizard of functional calculus --- for
suggesting the functional calculus trickery used below. 
\bigskip

In the initial stages of the proof of
Proposition~\ref{BCW:prop:traces-on-cones-qd}, we reduced to the
situation where $A$ is a unital $\mathrm{C}^*$-algebra, $\tau_A$ is an
amenable trace on $A$, and we need to show that the trace
$\tau=\delta_1\otimes\tau_A$ on the cone $CA=C_0((0,1])\otimes A$ is
quasidiagonal.  We also (somewhat implicitly) assume that $A$ is
separable. As both amenability and quasidiagonality are local
properties it suffices to treat the case of separable $A$.
Using \cite[Proposition 3.6]{G:JFA} we obtain a c.p.c.\
order zero map $\Psi:A\rightarrow\mathcal Q_\omega$ satisfying
\eqref{BCW:eq:gabe-cpc}.  By construction this c.p.c.\ order zero map
comes with a c.p.c.\ lift to $\ell^\infty(\mathcal Q)$. This is not
recorded explicitly in \cite{G:JFA}, but is readily seen from the
proof: $\Psi$ is of the form $(1_{\mathcal
  Q_\omega}-e)\psi_0(\cdot)(1_{\mathcal Q_\omega}-e)$ for a c.p.c.\
map $\psi_0:A\rightarrow\mathcal Q_\omega$ with a c.p.c.\ lift to
$\ell^\infty(\mathcal Q)$ and a positive contraction $e\in\mathcal
Q_\omega$ (and so $e$ lifts to a representative sequence of positive
contractions in $\ell^\infty(\mathcal Q)$).  We then use the duality
between c.p.c.\ order zero maps and $^*$-homomorphisms from cones
(\cite[Corollary 4.1]{Winter-Zacharias09b}) to obtain a $^*$-homomorphism
$\psi:CA\rightarrow\mathcal Q_\omega$ with $\psi(\id_{(0,1]}\otimes
a)=\Psi(a)$ for all $a\in A$; checking in \eqref{BCW:eq:psi-witness} that
$\psi$ has $\tau_{\mathcal Q_\omega}\circ\psi=\delta_1\otimes\tau_A$.
To complete the proof of Proposition~\ref{BCW:prop:traces-on-cones-qd}
we must show that $\psi$ (which is uniquely determined by $\Psi$) has
a c.p.c.\ lift to $\ell^\infty(\mathcal Q)$.  We do this in the
following lemma, which may well be of use in other situations.

\begin{lemma}\label{BCW:Fix}
  Let $A$ be a separable unital $\mathrm{C}^*$-algebra, and let $B$ be
  a unital $\mathrm{C}^*$-algebra.  Suppose that
  $(\psi_n)_{n=1}^\infty$ is a sequence of c.p.c.\ maps from $A$ into
  $B$ inducing an order zero map $\psi:A\rightarrow B_\omega$. Then
  there exists a sequence of cpc maps $(\phi_n)_{n=1}^\infty$ from
  $C_0((0,1])\otimes A$ into $B$ inducing a $^*$-homomorphism
  $\phi:C_0((0,1])\otimes A\rightarrow B_\omega$ such that
  $\phi(\id\otimes x)=\psi(x)$ for $x\in A$.
\end{lemma}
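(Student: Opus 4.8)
The plan is to use the Winter--Zacharias correspondence (\cite[Corollary 4.1]{Winter-Zacharias09b}) to recast the problem as a lifting problem for a single $*$-homomorphism, and then to build the lift by hand from the data already available for $\psi$. Writing $h:=\psi(1_A)$, a positive contraction in $B_\omega$, the order zero map $\psi$ determines a unique $*$-homomorphism $\phi\colon C_0((0,1])\otimes A\to B_\omega$ with $\phi(\mathrm{id}\otimes a)=\psi(a)$, with supporting $*$-homomorphism $\pi$ satisfying $\psi(a)=h\pi(a)$ and $h$ commuting with $\pi(A)$. Since the polynomials vanishing at $0$ are dense in $C_0((0,1])$, the elements $\mathrm{id}^k\otimes a$ ($k\ge 1$, $a\in A$) span a dense $*$-subalgebra of the cone, and on these $\phi$ is given explicitly by $\phi(\mathrm{id}^k\otimes a)=h^{k-1}\psi(a)=\psi(1_A)^{k-1}\psi(a)$, using only that $\mathrm{id}^k\otimes a=(\mathrm{id}\otimes 1_A)^{k-1}(\mathrm{id}\otimes a)$. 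This is the crucial point: $\phi$ is recovered on a dense subalgebra purely from $\psi$, so to verify that a candidate sequence $(\phi_n)$ induces $\phi$ it will suffice to check, for each fixed $k$ and $a$, that $\phi_n(\mathrm{id}^k\otimes a)$ represents $\psi(1_A)^{k-1}\psi(a)$ in $B_\omega$. (When $A$ is nuclear this whole discussion is unnecessary, as the Choi--Effros lifting theorem applies directly to $\phi$; removing that hypothesis is the entire content of the lemma.) Note also that one should \emph{not} expect to lift $\phi$ by $*$-homomorphisms, equivalently to lift $\psi$ by honest order zero maps, since cones are not projective; this is exactly why the conclusion asks only for completely positive contractive $\phi_n$, and is the source of the difficulty.

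To produce genuinely completely positive contractive maps I would try to realise each $\phi_n$ as a compression of an honest $*$-representation of the cone, this being the canonical way to guarantee complete positivity and contractivity. Fixing a Stinespring-type dilation $\psi_n(a)=V_n^{*}\rho_n(a)V_n$ with $h_n:=\psi_n(1_A)=V_n^{*}V_n$, the aim is to manufacture a positive contraction $T_n$ commuting with $\rho_n(A)$, so that $\Pi_n(f\otimes a):=f(T_n)\rho_n(a)$ is a $*$-representation of $C_0((0,1])\otimes A$ and $\phi_n:=V_n^{*}\Pi_n(\cdot)V_n$ is a compression of it. The functional calculus of $h_n$ enters through the choice of $T_n$: one wants $V_n^{*}T_n^{k}\rho_n(a)V_n$ to represent $h^{k-1}\psi(a)$ in the limit, which by the explicit formula for $\phi$ above is precisely what induces $\phi$. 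The verification that $(\phi_n)$ induces a $*$-homomorphism should then collapse, via the computation on the dense polynomial subalgebra, to the asymptotic relations already guaranteed by $\psi$ being order zero: $[h_n,\psi_n(a)]\to 0$ and $\psi_n(a)\psi_n(b)-\psi_n(1_A)\psi_n(ab)\to 0$ along $\omega$ (both of which follow in $B_\omega$ from $\psi=h\pi(\cdot)$ with $h$ central relative to $\pi(A)$).

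The main obstacle is precisely that $h_n=\psi_n(1_A)$ does not commute with $\psi_n(A)$ at finite stages --- this commutation, and with it the multiplicativity of $\phi$, holds only in the limit. Consequently the two most obvious formulas are both inadmissible: $f\otimes a\mapsto f(h_n)\psi_n(a)$ fails to be completely positive (a product of non-commuting positives), while $f\otimes a\mapsto f(h_n)^{1/2}\psi_n(a)f(h_n)^{1/2}$ is not even linear in $f$. A further, equally serious subtlety is that the finite-stage maps must take values in $B$ itself, whereas a compression of a dilation naturally lands in the ambient multiplier or bounded-operator algebra. Overcoming both simultaneously is the role of the ``functional calculus trickery'': one must insert the functional calculus of $h_n$ (equivalently, choose the auxiliary contraction $T_n$) so that the resulting maps are at once completely positive, contractive, $B$-valued, and asymptotically multiplicative. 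This is the step I expect to require real work; once the finite-stage maps are in hand, the proof closes with the routine convergence check on the generators $\mathrm{id}^k\otimes a$ described in the first paragraph, using density of the polynomial subalgebra together with the two asymptotic order zero relations for $(\psi_n)$.
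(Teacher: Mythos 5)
Your framing is accurate, and your verification scheme is actually sound: a uniformly bounded sequence of c.p.c.\ maps always induces a c.p.c.\ map into $B_\omega$, so once one knows $\phi_n(\id^k\otimes a)$ represents $\psi(1_A)^{k-1}\psi(a)$ for each $k$ and $a$, the induced map agrees with the Winter--Zacharias $^*$-homomorphism on the dense span of the elements $\id^k\otimes a$ and hence equals it. (The paper instead verifies approximate multiplicativity directly via Kirchberg's $\eps$-test, but your reduction would serve.) You also correctly isolate the central dilemma: $f\otimes a\mapsto f(e_n)\psi_n(a)$ is linear in $f$ but not positive, while the sandwich $f(e_n)^{1/2}\psi_n(a)f(e_n)^{1/2}$ is positive but not linear in $f$. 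The genuine gap is that the proposal stops exactly where the proof begins: no definition of $\phi_n$ is ever given. The dilation route you sketch does not produce one. There is no candidate for $T_n$: any operator built from $h_n=\psi_n(1_A)$ by functional calculus fails to commute with $\rho_n(A)$ at a fixed $n$ for precisely the same reason $h_n$ fails to commute with $\psi_n(A)$, and the asymptotic relation $[e_n,\psi_n(a)]\to 0$ along $\omega$ is invisible at any finite stage, so the commuting $T_n$ you need cannot be manufactured from the given data. Moreover, as you note yourself, the compression $V_n^*f(T_n)\rho_n(a)V_n$ lives in $\mathcal{B}(H_n)$ rather than in $B$, and no mechanism is offered to repair this. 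Deferring both issues with ``this is the step I expect to require real work'' concedes the entire content of the lemma.

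For comparison, the paper resolves your linearity-versus-positivity dilemma by \emph{discretizing $f$} rather than dilating. With $e_n=\psi_n(1_A)$, a saw-tooth partition of unity $h_0,\dots,h_m$ of $[0,1]$ (mesh chosen by uniform continuity of the finitely many $f$'s in play) and an approximate inverse $g_\delta$ of the identity function (equal to $t^{-1}$ above $\delta$, linear below), one sets
\begin{equation*}
  \phi_n(f\otimes a)=\sum_{l=0}^m f(l/m)\,
  h_l(e_n)^{1/2}g_\delta(e_n)^{1/2}\,\psi_n(a)\,
  g_\delta(e_n)^{1/2}h_l(e_n)^{1/2}.
\end{equation*}
Since $f$ enters only through the sampled values $f(l/m)$, this is linear in $f$; each summand is a compression of the c.p.\ map $\psi_n$, so $\phi_n$ is completely positive and manifestly $B$-valued; and contractivity follows from $\sum_l h_l=1$ and $0\leq tg_\delta(t)\leq 1$. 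The convergence checks then use $[e_n,\psi_n(a)]\to 0$, the order zero identity $\psi(1_A)\psi(ab)=\psi(a)\psi(b)$, and a preliminary lemma (proved via the functional calculus $\psi^r$ for order zero maps) asserting that $\psi(a)=\lim_\omega e_nb_n$ for contractions $b_n\in B$, which is what renders the $g_\delta(e_n)$ factors harmless. One smaller caution: your parenthetical claim that cones are not projective is stated more strongly than what is known; the relevant point is only that $^*$-homomorphism lifts are not available off the shelf, so the c.p.c.\ lifts must be built by hand --- and building them is the whole proof.
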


Before giving the proof of Lemma \ref{BCW:Fix} we record the following
fact which will be used repeatedly.
\begin{lemma}\label{BCW:Hered}
  Let $A$ be a separable unital $\mathrm{C}^*$-algebra, and let $B$ be
  a unital $\mathrm{C}^*$-algebra.  Suppose that
  $(\psi_n)_{n=1}^\infty$ is a sequence of c.p.c.\ maps from $A$ into
  $B$ inducing an order zero map $\psi:A\rightarrow B_\omega$. For a
  contraction $a\in A$, there exists a sequence $(b_n)_{n=1}^\infty$
  of contractions in $B$ so that $(\psi_n(1_A)b_n)_{n=1}^\infty$ and
  $(b_n\psi_n(1_A))_{n=1}^\infty$ both represent $\psi(a)$.
\end{lemma}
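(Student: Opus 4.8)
The plan is to read off the $b_n$ from the functional calculus of the positive contraction $h:=\psi(1_A)\in B_\omega$, which is represented by $(\psi_n(1_A))_n$, together with $\psi(a)\in B_\omega$, represented by $(\psi_n(a))_n$. The starting point is the structure theory of order zero maps \cite{Winter-Zacharias09b}, which gives the relation $\psi(x)\psi(y)=\psi(1_A)\psi(xy)$ for all $x,y\in A$. Taking $x=1_A$ shows that $h$ commutes with every $\psi(y)$, and taking $x=a$, $y=a^*$ gives $\psi(a)\psi(a)^*=h\,\psi(aa^*)$. Since $aa^*\le 1_A$ forces $0\le\psi(aa^*)\le\psi(1_A)=h$, and $\psi(aa^*)$ commutes with $h$, multiplying by $h\ge 0$ yields the key estimate
\[
  \psi(a)\psi(a)^*=h\,\psi(aa^*)\le h^2 .
\]

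The naive candidate for the class of $(b_n)$ is the value $\pi(a)$ of the supporting $^*$-homomorphism of $\psi$ (so that $\psi(a)=h\pi(a)=\pi(a)h$); the obstruction is that $\pi(a)$ lives in the multiplier algebra of $C^*(\psi(A))$ and need not lie in $B_\omega$, so it cannot be lifted to a sequence in $B$. I would circumvent this by regularising with functional calculus. For $\delta>0$ set $r_\delta(t)=\max(t,\delta)^{-1/2}$, a bounded continuous function, and define
\[
  \beta_\delta:=r_\delta(h)\,\psi(a)\,r_\delta(h)\in B_\omega,
\]
which genuinely lies in $B_\omega$ because $B_\omega$ is unital and hence $r_\delta(h)\in C^*(h,1_{B_\omega})$. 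As $r_\delta(h)$ commutes with $\psi(a)$ we have $\beta_\delta=s_\delta(h)\psi(a)$ with $s_\delta(t)=\max(t,\delta)^{-1}$, so that $h\beta_\delta=g_\delta(h)\psi(a)$ for $g_\delta(t)=t/\max(t,\delta)$. Using $\psi(a)\psi(a)^*\le h^2$ and the elementary bound $|(g_\delta(t)-1)t|\le\delta/4$ on $[0,1]$ one obtains
\[
  \|h\beta_\delta-\psi(a)\|^2=\|(g_\delta(h)-1)\psi(a)\|^2\le\|(g_\delta(h)-1)h\|^2\le(\delta/4)^2,
\]
and the same computation bounds $\|\beta_\delta h-\psi(a)\|$ and shows $\|\beta_\delta\|\le 1$.

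It remains to upgrade this family of approximate solutions to a single exact representative. For each $m$ I would lift the contraction $\beta_{1/m}$ to a bounded sequence $(b_n^{(m)})_n$ of contractions in $B$, so that $\lim_{n\to\omega}\|\psi_n(1_A)b_n^{(m)}-\psi_n(a)\|=\|h\beta_{1/m}-\psi(a)\|\le 1/(4m)$, and likewise with the factors reversed. Then Kirchberg's $\epsilon$-test \cite[Appendix~A]{Kirchberg06}, applied with $X_n$ the set of contractions of $B$ and the two test functions $b\mapsto\|\psi_n(1_A)b-\psi_n(a)\|$ and $b\mapsto\|b\psi_n(1_A)-\psi_n(a)\|$, produces a single sequence $(b_n)$ of contractions in $B$ for which both $\omega$-limits vanish. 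Consequently $(\psi_n(1_A)b_n)_n$ and $(b_n\psi_n(1_A))_n$ both represent $\psi(a)$, proving the lemma.

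The step I expect to be the main obstacle is exactly the impossibility of solving $h\beta=\psi(a)$ exactly inside $B_\omega$ by a single $\beta$: the honest solution $\pi(a)$ escapes to the multiplier algebra, so the functional-calculus cut-offs $\beta_\delta$ supply only approximate solutions. Turning these approximations into an exact representative---via the reindexing packaged in the $\epsilon$-test---is the real content, and one must take care that both one-sided conditions (left and right multiplication by $\psi_n(1_A)$) are met simultaneously, which is why both are fed to the test.
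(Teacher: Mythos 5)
Your proposal is correct, and it shares the paper's skeleton: both proofs first invoke Kirchberg's $\epsilon$-test (\cite[Lemma A.1]{Kirchberg06}) to reduce the lemma to producing, for each $\epsilon>0$, a sequence of contractions solving the two one-sided conditions up to $\epsilon$, and both then build these approximants by a functional-calculus regularisation in $h=\psi(1_A)$ of the multiplier-valued supporting homomorphism, which (as you correctly identify) is exactly the obstruction to an exact solution inside $B_\omega$. Where you genuinely diverge is in the choice of regularisation and the key estimate. The paper uses the functional calculus for order zero maps from \cite[Theorem 3.3]{Winter-Zacharias09b}, taking $f(t)=t^{r}$ with $r$ small: the contractions $b_n$ represent $\psi^{r}(a)$, one has the \emph{exact} factorisation $\psi(a)=\psi(1_A)^{1-r}\psi^{r}(a)=\psi^r(a)\psi(1_A)^{1-r}$, and the error is controlled by $\sup_{t\in[0,1]}|t^{1-r}-t|<\epsilon$. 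You instead let $b_n$ represent $\beta_\delta=\max(h,\delta)^{-1}\psi(a)$ and control the error via the inequality $\psi(a)\psi(a)^*=h\,\psi(aa^*)\le h^2$, which lets you replace $\psi(a)$ by $h$ inside the norm, so that $\|(g_\delta(h)-1)\psi(a)\|\le\|(g_\delta(h)-1)h\|\le\delta/4$. Your route needs only the order zero identity $\psi(x)\psi(y)=\psi(1_A)\psi(xy)$ (plus unitality of $B$, which you correctly flag as what puts $r_\delta(h)$ into $B_\omega$), avoiding the $f(\psi)$ machinery entirely, at the modest cost of the extra $\mathrm{C}^*$-inequality; the paper's choice buys the slicker exact factorisation and, incidentally, positive approximants. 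Two small points: to get $[h,\psi(y)]=0$ you should substitute $y=1_A$ in the order zero identity, not $x=1_A$ (the latter is a tautology); and since your $\beta_\delta=s_\delta(h)\psi(a)$ commutes with $h$ in $B_\omega$, the left and right $\omega$-limits coincide, so a single computation verifies both test functions fed into the $\epsilon$-test. Neither affects correctness.
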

\begin{proof}
  By Kirchberg's $\eps$-test (\cite[Lemma A.1]{Kirchberg06}) to prove the
  lemma it suffices to take $\eps>0$ and find a sequence
  $(b_n)_{n=1}^\infty$ of contractions so that
  \begin{equation}
    \lim_{n\rightarrow\omega}
    \| \psi_n(1_A)b_n - \psi_n(a)\|
    \quad\text{and}\quad
    \lim_{n\rightarrow\omega} \| b_n\psi_n(1_A) - \psi_n(a) \|
    < \eps.
  \end{equation}
  Recall from \cite[Theorem 3.3]{Winter-Zacharias09b} that the supporting
  $^*$-homomorphism of $\psi$ is a $^*$-homomorphism $\pi:A\rightarrow
  \mathcal M(C^*(\psi(A)))$ satisfying
  $\psi(x)=\pi(x)\psi(1_A)=\psi(1_A)\pi(x)$, for $x\in A$.  Then, for
  $f\in C_0((0,1])_+$, $f(\psi)$ is defined by
  $f(\psi)(x)=\pi(x)f(\psi(1_A))$, for $x\in A$; this is a c.p.\ order
  zero map (which is contractive when $f$ is). Taking $f(t)=t^r$,
  where $0<r<1$, gives
 \begin{equation}
    \psi(x) = \pi(x)\psi(1_A)^r\psi(1_A)^{1-r}
    = \psi^r(a)\psi(1_A)^{1-r}
    = \psi(1_A)^{1-r}\psi^r(x),\quad
    x\in A.
  \end{equation}
  Fix $0<r<1$ small enough so that $\sup_{t\in
    [0,1]}|t^{1-r}-t|<\eps$.  For a contraction $a\in A$, let
  $(b_n)_{n=1}^\infty$ be a sequence of positive contractions in $B$
  representing the contraction $\psi^r(a)$. Then
  \begin{align}
    \nonumber\lim_{n\rightarrow\omega}
    \| \psi_n(1_A)b_n-\psi_n(a)\|
    &= \lim_{n\rightarrow\omega} \| \psi_n(1_A)b_n -
      \psi_n(1_A)^{1-r}b_n\|\\
    & \leq \lim_{n\rightarrow\omega} \| \psi_n(1_A) -
      \psi_n(1_A)^{1-r} \|
      < \eps.
  \end{align}
  Likewise
  \begin{equation}
    \lim_{n\rightarrow\omega}\|b_n\psi_n(1_A)-\psi_n(a)\|<\eps,
  \end{equation}
  as required.
\end{proof}

\begin{proof}[Proof of Lemma \ref{BCW:Fix}.]
  We use Kirchberg's $\eps$-test (\cite[Lemma A.1]{Kirchberg06}) to handle
  the reindexing and allow us to prove a slightly weaker statement. To
  set this up, for each $n$, let\footnote{There is no dependence on
    $n$ in the definition of $X_n$. We use this notation for
    consistency with the usual formulation of Kirchberg's
    $\eps$-test.} $X_n$ be the set of c.p.c.\ maps $C_0((0,1])\otimes
  A\rightarrow B$, let $(a_i)_{i=1}^\infty$ be a sequence which is
  dense in the positive contractions of $A$ and $(f_i)_{i=1}^\infty$ a
  sequence which is dense in the positive contractions of
  $C_0((0,1])$.  For $i,i_1,i_2,j_1,j_2\in\mathbb N$, define
  $r^{(i_1,i_2,j_1,j_2)}_n,s^{(i)}_n:X_n\rightarrow[0,\infty)$ by
  \begin{equation}
    r^{(i_1,i_2,j_1,j_2)}_n(\phi_n)
    = \| \phi_n(f_{i_1}\otimes a_{i_2}) \phi_n(f_{j_1}\otimes a_{j_2})
    - \phi_n(f_{i_1}f_{j_1}\otimes a_{i_2}a_{j_2}) \|
  \end{equation}
  and 
  \begin{equation}
    s^{(i)}_n(\phi_n) = \|\phi_n(\id\otimes a_i) - \psi_n(a_i)\|.
  \end{equation}
  Then a sequence $(\phi_n)_{n=1}^\infty\in\prod_{n=1}^\infty X_n$ induces a
  $^*$-homomorphism $\phi:C_0((0,1])\otimes A\rightarrow B_\omega$ if
  and only if
  $\lim_{n\rightarrow\omega}r^{(i_1,i_2,j_1,j_2)}(\phi_n)=0$ for all
  $i_1,i_2,j_1,j_2\in\mathbb N$ and has $\phi(\id\otimes x)=\psi(x)$
  for all $x\in A$ if and only if
  $\lim_{n\rightarrow\omega}s^{(i)}_n(\phi_n)=0$ for all $i\in\mathbb
  N$.  Thus, by Kirchberg's $\eps$-test we can fix $\eps>0$ and
  $i_0\in\mathbb N$, and it suffices to find a sequence
  $(\phi_n)_{n=1}^\infty\in\prod_{n=1}^\infty X_n$ such that
  \begin{equation}\label{BCW:EpsTest}
    \lim_{n\rightarrow\omega}r^{(i_1,i_2,j_1,j_2)}_n(\phi_n)\leq \eps
    \quad\text{ and }\quad
    \lim_{n\rightarrow\omega}s^{(i)}_n(\phi_n)\leq\eps
  \end{equation}
  for $i,i_1,i_2,j_1,j_2=1,\dots,i_0$.

  For $\delta>0$ to be chosen later, define $g_\delta\in C_0((0,1])$
  by
  \begin{equation}
    g_\delta(t)=
    \begin{cases}
      \delta^{-2}t,&0\leq t\leq\delta\\
      t^{-1},& t>\delta
    \end{cases}.
\end{equation}
  Note that 
  \begin{equation}\label{BCW:GEst}
    0\leq g_\delta(t)t\leq 1,\quad t\in [0,\infty),
    \quad\text{ and }\quad
    \sup_{t\in [0,1]}|(g_\delta(t)t-1)t|\leq\delta.
  \end{equation}
  Fix $m\in\mathbb N$ with the property that
  \begin{equation}\label{BCW:Choicem}
  |f_i(x)-f_i(y)|\leq\eps/3\text{ whenever }|x-y|\leq 1/m
  \end{equation}
  for all for $i\leq i_0$.  Let $h_0,\dots,h_m$ be standard
  `saw-tooth' partition of unity corresponding to the division of
  $[0,1]$ into $m$ intervals.  That is, $h_i$ is (the restriction to
  $[0,1]$ of) the piecewise affine function satisfying
  $h_i((i-1)/m)=0$, $h_i(i/m)=1$, $h_i((i+1)/m)=0$, and defined to be
  affine on the intervals $[(i-1)/m,i/m]$ and $[i/m,(i+1)/m]$ and $0$
  outside the interval $[(i-1)/m,(i+1)/m]$. In this way, for any
  scalars $\alpha_0,\dots,\alpha_m$, the element
  $f=\sum_{l=0}^m\alpha_lh_l$ is the piecewise affine function in
  $C([0,1])$ with $f(j/m)=\alpha_j$ and affine on the intervals
  $[j/n,(j+1)/n]$.

  To simplify notation we will write $e_n$ for $\psi_n(1_A)$.  For
  each $n$, we define a map $\phi_n:C_0((0,1])\otimes A\rightarrow B$
  by
  \begin{align}
    \phi_n(f\otimes a) = \sum_{l=0}^m
    f(l/m)h_l(e_n)^{1/2}g_\delta(e_n)^{1/2}\psi_n(a) g_\delta(e_n)^{1/2}h_l(e_n)^{1/2}
  \end{align}
  for $f\in C_0((0,1])$ and $a\in A$.  This certainly extends to a
  linear map, and as $\psi_n$ is completely positive, so too is each
  map in the sum defining $\phi_n$, and so $\phi_n$ is completely
  positive.  Now for a positive contraction $f\in C_0((0,1])$ we have
  \begin{align}
    \nonumber
    \phi_n(f\otimes 1_A)
    &= \sum_{l=0}^mf(l/m)g_\delta(e_n)h_l(e_n)e_n\\
    &\leq g_\delta(e_n)e_n\sum_{l=0}^m h_l(e_n)\leq 1_B,
  \end{align}
  as $0\leq g_\delta(t)t\leq 1$ and $\sum_{l=0}^mh_l(t)=1$ for all
  $t\in [0,1]$. Since $f$ is an arbitrary positive contraction, it
  follows that $\phi_n$ is contractive (without any condition on
  $\delta$). It remains to show that we can choose $\delta$ so that
  the sequence $(\phi_n)_{n=1}^\infty\in\prod_{n=1}^\infty X_n$ verifies the two
  conditions of (\ref{BCW:EpsTest}).

  Since the sequence $(\psi_n)_{n=1}^\infty$ induces an order zero
  map, we have
  \begin{equation}\label{BCW:OZCom}
    \lim_{n\rightarrow\omega} \|[e_n,\psi_n(a)]\| = 0,\quad a\in A.
  \end{equation}
  Therefore
  \begin{equation}
    \lim_{n\rightarrow\omega}
    \Big\| \phi_n(\id\otimes a) - \psi_n(a)g_\delta(e_n) \sum_{l=0}^m
    \frac{l}{m}h_l(e_n) \Big\| = 0.
  \end{equation}
  Now $\sum_{l=0}^m\frac{l}{m}h_l(t)=t$, so 
  \begin{equation}
    \lim_{n\rightarrow\omega}\|\phi_n(\id\otimes a)-\psi_n(a)g_\delta(e_n)e_n\|=0.
  \end{equation}
  Fix a contraction $a\in A$, and by Lemma \ref{BCW:Hered}, let
  $(b_n)_{n=1}^\infty$ be a sequence of contractions in $B$ so that
  $(b_ne_n)_{n=1}^\infty$ represents $\psi(a)$.  Using (\ref{BCW:GEst})
  for the second estimate below, we have
  \begin{equation}
    \lim_{n\rightarrow\omega}\|\psi_n(a)g_\delta(e_n)e_n-\psi_n(a)\|
    \leq \lim_{n\rightarrow\omega}\|b_n\|\|e_ng_\delta(e_n)e_n-e_n\|\leq\delta.
  \end{equation}
  In this way
  \begin{equation}
    \lim_{n\rightarrow\omega}\|\phi_n(\id\otimes a)-\psi_n(a)\|\leq \delta
  \end{equation}
  for any contraction $a\in A$.  Then, provided we ensure
  $\delta<\eps$, we get $\lim_{n\rightarrow\omega} s^{(i)}_n (\phi_n)
  \leq \eps$ for all $i$.

  Now we show `almost multiplicativity'. For this, fix
  $i_1,i_2,j_1,j_2\leq i_0$.  Using (\ref{BCW:OZCom}) and the fact that
  $h_kh_l=0$ for $|k-l|\geq 2$ we have
  \begin{align}
    &\lim_{n\rightarrow\omega}
      \| \phi_n(f_{i_1}\otimes a_{i_2}) \phi_n(f_{j_1}\otimes a_{j_2})
      - \phi_n(f_{i_1}f_{j_1}\otimes a_{i_2}a_{j_2}) \|\\
    \nonumber
    =&\lim_{n\rightarrow\omega} \Big\| \sum_{|k-l|\leq 1}
       f_{i_1}(k/m)f_{j_1}(l/m)h_k(e_n)h_l(e_n)
       g_\delta(e_n)\psi_n(a_{i_2}) g_\delta(e_n)\psi_n(a_{j_2})\\
    \nonumber
    &\qquad\quad- \sum_{l=0}^m
      f_{i_1}(l/m)f_{j_1}(l/m) h_l(e_n)g_\delta(e_n)
      \psi_n(a_{i_2}a_{j_2}) \Big\|.
  \end{align} 
  Using Lemma \ref{BCW:Hered}, we can find a sequence $(b_n)_{n=1}^\infty$
  of contractions in $B$ so that $(e_nb_n)_{n=1}^\infty$ represents
  $\psi(a_{i_2})$. In this way (\ref{BCW:GEst}) gives
  \begin{equation}
    \lim_{n\rightarrow\omega}
    \| g_\delta(e_n)\psi_n(a_{i_2})\|
    = \lim_{n\rightarrow\omega} \| g_\delta(e_n)e_nb_n \|
    \leq 1. 
  \end{equation}
  Likewise
  $\lim_{n\rightarrow\omega}\|g_\delta(e_n)\psi_n(a_{j_2})\|\leq 1$.
  As, for each $l$, $\sum_{k=l-1}^{l+1}h_k$ acts as a unit on
  $h_l$,\footnote{When $l=0$ or $l=m$ there are only two terms in this
    sum, but the result still holds.} we may use the estimates above
  to obtain
  \begin{align}\label{BCW:eq:m-m}
    \lim_{n\rightarrow\omega}
    &\Big\| \sum_{|k-l|\leq 1}
      f_{i_1}(k/m)f_{j_1}(l/m)h_k(e_n)h_l(e_n)
      g_\delta(e_n)\psi_n(a_{i_2}) g_\delta(e_n)\psi_n(a_{j_2})\\
    \nonumber
    &\quad -\sum_{l=0}^m
      f_{i_1}(l/m)f_{j_1}(l/m)h_l(e_n) g_\delta(e_n)\psi_n(a_{i_2})
      g_\delta(e_n)\psi_n(a_{j_2}) \Big\|\\
    \nonumber
    \leq \lim_{n\rightarrow\omega}
    &\Big\| \sum_{l=0}^m
      f_{j_1}(l/m)h_l(e_n)
      \sum_{k=l-1}^{l+1} \big( f_{i_1}(k/m) - f_{i_1}(l/m) \big)
      h_k(e_n) \Big\|.
  \end{align}
  Using the choice of $m$ in (\ref{BCW:Choicem}), observe that \( |
  \sum_{k=l-1}^{l+1} \big( f_{i_1}(k/m) - f_{i_1}(l/m) \big) h_k |
  \leq (\varepsilon/3) \sum_{k=l-1}^{l+1} h_k \) (in $C([0,1])$) for
  every $l = 0, \dots, m$, and therefore
  \begin{align}
    &\Big|\sum_{l=0}^m f_{j_1}(l/m)h_l \sum_{k=l-1}^{l+1} \big(
      f_{i_1}(k/m) - f_{i_1}(l/m) \big) h_k\Big|
      \leq
      (\varepsilon/3) \sum_{l=0}^m f_{j_1}(l/m) h_l \leq \varepsilon/3,
  \end{align}
  as $f_{j_1}$ is a contraction.  This shows that the last limit in
  \eqref{BCW:eq:m-m} is bounded above by $\varepsilon/3$.

  Using this, the order zero identity
  $\psi(1_A)\psi(a_{i_2}a_{j_2})=\psi(a_{i_1})\psi(a_{j_2})$ (see
  \cite[(1.3)]{BBSTWW:MAMS}, for example) and (\ref{BCW:OZCom}) again, we
  have
  \begin{align}\label{BCW:E1}
    &\lim_{n\rightarrow\omega}
      \| \phi_n(f_{i_1}\otimes a_{i_2}) \phi_n(f_{j_1}\otimes a_{j_2})
      - \phi_n(f_{i_1}f_{j_1}\otimes a_{i_2}a_{j_2}) \|\\
    \nonumber
    &\leq\lim_{n\rightarrow\omega}
      \Big\| \sum_{l=0}^m f_{i_1}(l/m)f_{j_1}(l/m)h_l(e_n) g_\delta(e_n)^2
      e_n \psi_n(a_{i_2}a_{j_2})\\
    \nonumber
    &\qquad\qquad - \sum_{l=0}^m f_{i_1}(l/m)f_{j_1}(l/m)h_l(e_n)
      g_\delta(e_n)\psi_n(a_{i_2}a_{j_2}) \Big\| + \eps/3.
  \end{align}

  Let $f_{i_1,j_1}(t)=\sum_{l=0}^mf_{i_1}(l/m)f_{j_1}(l/m)h_l(t)$, so
  that $f_{i_1,j_1}$ is a contraction in $C_0((0,1])$.  We aim to
  control
  \begin{equation}
    \lim_{n\rightarrow\omega}
    \big\| f_{i_1,j_1}(e_n)\big(g_\delta(e_n)^2e_n - g_\delta(e_n) \big)
    \psi_n(a_{i_2}a_{j_2}) \big\|.
  \end{equation}
  Use Lemma \ref{BCW:Hered} to find a sequence $(b_n)_{n=1}^\infty$ of
  contractions in $B$ so that $(e_nb_n)_n$ represents
  $\psi(a_{i_2}a_{j_2})$.  As (\ref{BCW:GEst}) gives $0\leq
  g_\delta(e_n)e_n\leq 1$ in $B$, so
  $\|g_\delta(e_n)^2e_n^2-g_\delta(e_n)e_n\|\leq\frac{1}{\sqrt{2}}-\frac{1}{2}$,
  giving
  \begin{align}
    \lim_{n\rightarrow\omega}
      \big\| \big(g_\delta(e_n)^2e_n - g_\delta(e_n)\big)
      \psi_n(a_{i_2}a_{j_2}) \big\|
    &=
    \lim_{n\rightarrow\omega}
      \big\| \big(g_\delta(e_n)^2e_n^2 - g_\delta(e_n)e_n\big)b_n
      \big\|
    \\\nonumber
    &\leq \frac{1}{\sqrt{2}}-\frac{1}{2},
  \end{align}
  \emph{independently} of the value of $\delta$ used in the definition
  of $g_\delta$.  Fix a polynomial function $p_{i_1,j_1}\in
  C_0((0,1])$ (so with no constant term) so that $\|p_{i_1,j_1} -
  f_{i_1,j_1}\| \leq \eps/3(\frac{1}{\sqrt{2}} -
  \frac{1}{2})$. Therefore, using the fact that
  $(e_nb_n)_{n=1}^\infty$ represents the same sequence as
  $(\psi_n(a_{i_2}a_{j_2}))_{n=1}^\infty$, we have
  \begin{align}\label{BCW:E2}
    &\lim_{n\rightarrow\omega}
      \big\| f_{i_1,j_1}(e_n) \big(g_\delta(e_n)^2e_n - g_\delta(e_n)\big)
      \psi_n(a_{i_2}a_{j_2})\big\|\\
    \nonumber\leq
    &\lim_{n\rightarrow\omega}
      \big\| p_{i_1,j_1}(e_n)
      \big(g_\delta(e_n)^2e_n^2-g_\delta(e_n)e_n\big)b_n \big\| + \eps/3,
  \end{align}
  again independent of the choice of $\delta$.  Finally, as
  $\sup_{t\in [0,1]}\big| t\big(g_\delta(t)^2t^2-g_\delta(t)t\big)
  \big|\leq \delta$ (from (\ref{BCW:GEst})) and $p_{i_1,j_1}(t)$ factors
  as $q_{i_1,j_1}(t)t$ for some polynomial $q_{i_1,j_1}(t)$ (this time
  with a possible constant term) we can now choose $\delta \leq
  \eps$ so that
  \begin{equation}\label{BCW:E3}
    \lim_{n\rightarrow\omega}
    \big\| p_{i_1,j_1}(e_n)
    \big(g_\delta(e_n)^2e_n^2-g_\delta(e_n)e_n\big) \big\|
  \leq \eps/3
  \end{equation}
  for all $i_1,j_1\leq i_0$.  Putting (\ref{BCW:E3}) together with
  (\ref{BCW:E1}) and (\ref{BCW:E2}), we obtain
  \begin{equation}
    \lim_{n\rightarrow\omega}
    r_n^{(i_1,i_2,j_1,j_2)}(\phi_n) \leq \eps,
  \end{equation}
for all $i_1,i_2,j_1,j_2\leq i_0$,  as required.
\end{proof}

\newcommand{\etalchar}[1]{$^{#1}$}
\providecommand{\bysame}{\leavevmode\hbox to3em{\hrulefill}\thinspace}

\providecommand{\href}[2]{#2}

\end{document}